\documentclass[11pt,reqno]{amsart}

\usepackage{amsmath, amsthm, amssymb}
\usepackage{setspace}
\usepackage{mathtools}
\usepackage{geometry}
\usepackage{hyperref}
\usepackage{enumitem}
\usepackage{graphicx}

\title{Finiteness of Disjoint Covering Systems with Precisely One Repeated Modulus}

\author{Yu Hashimoto}

\newtheorem{theorem}{Theorem}[section]
\newtheorem{lemma}[theorem]{Lemma}
\newtheorem{corollary}[theorem]{Corollary}
\newtheorem{proposition}[theorem]{Proposition}
\theoremstyle{definition}
\newtheorem{definition}[theorem]{Definition}
\newtheorem{remark}[theorem]{Remark}
\newtheorem{problem}[theorem]{Problem}
\newtheorem{question}[theorem]{Question}
\newtheorem{conjecture}[theorem]{Conjecture}

\newcommand{\N}{\mathbb{N}}
\newcommand{\Z}{\mathbb{Z}}

\DeclareMathOperator{\ind}{index}

\newcommand{\Cset}{\mathcal{C}}
\newcommand{\Aset}{\mathcal{A}}
\newcommand{\Lset}{\mathcal{L}}
\newcommand{\Dset}{\mathcal{D}}

\mathtoolsset{showonlyrefs=true}

\begin{document}

\begin{abstract}
We prove that for each fixed \(m \ge 2\), there are only finitely many disjoint covering systems with minimum modulus at least \(3\) in which precisely one modulus is repeated, namely the largest modulus, and it occurs exactly \(m\) times.
\end{abstract}
\maketitle
\section{Introduction}

A collection of arithmetic progressions \((a_i+n_i\Z)_{i=1}^t\) is called a \emph{disjoint covering system} (DCS) if
every integer lies in exactly one of these progressions.
It is a classical theorem of Mirsky, Newman, Davenport, and Rado (see \cite{Erdos}) that the moduli of a DCS cannot all be distinct: the largest modulus must occur at least twice.
Motivated by this, we study DCSs in which \emph{only} the largest modulus is repeated, namely those satisfying
\begin{equation}\label{eq:distinctas}
n_1 < n_2 < \cdots < n_{t-m} < n_{t-m+1}=\cdots=n_t
\qquad (m\ge 2).
\end{equation}
We call a DCS \emph{trivial} if \(m=t\).
There exist infinitely many systems satisfying \eqref{eq:distinctas} for \(m = 2\), namely
\begin{equation}\label{eq:trivial}
(2\Z, 4\Z +1, \ldots ,2^{t-1}\Z + 2^{t-2} - 1,2^{t-1}\Z + 2^{t-1} - 1).
\end{equation}
Following \cite{BFF}, we may normalize \eqref{eq:distinctas} using the \emph{2-add}/\emph{2-drop} procedure and thus restrict
attention to the case where the smallest modulus is at least \(3\):
\begin{equation}\label{eq:basicas}
3 \le n_1 < n_2 < \cdots < n_{t-m} < n_{t-m+1}=\cdots=n_t
\qquad (m\ge 2).
\end{equation}
Indeed, if \(n_1 = 2\), then disjointness forces all other moduli to be even.
In this situation we apply the \emph{2-drop} operation: remove \(n_1\) and divide all remaining moduli by \(2\). Iterating 2-drop, we either arrive at the trivial system \(( 2\Z,2\Z + 1 )\)
or obtain a system with \(n_1 \ge 3\).
Conversely, any system satisfying \eqref{eq:basicas} gives rise to systems with \(n_1 = 2\) via repeated \emph{2-add},
which adjoins the modulus \(2\) and doubles all other moduli. Thus, we may assume \eqref{eq:basicas} without loss of generality.

Interpreting the earlier results via the above normalization \eqref{eq:basicas}, one obtains the following:
Stein \cite{Stein} showed that there is no nontrivial DCS with \(m=2\), and Zn\'am \cite{Znam1969,Znam1984} proved the same for \(m=3\).
Porubsk\'y \cite{Porubsky} determined the unique nontrivial DCS for \(m=4\) and showed that for \(m=5\) there is no nontrivial system.
Berger, Felzenbaum, and Fraenkel \cite{BFF} classified all nontrivial DCSs satisfying \eqref{eq:basicas} for \(4\le m\le 9\). Zeleke and Simpson \cite{ZS} extended this classification to \(10\le m\le 12\), and Ekhad, Fraenkel, and Zeilberger \cite{EFZ} enumerated examples up to \(m=32\).
Although they did not prove completeness, they verified their list and exhausted all systems with largest modulus \(\le 600\).
They further conjectured that, for each fixed \(m\), only finitely many systems exist in the normalized setting.

Motivated by these results, we prove the following finiteness statement.

\begin{theorem}\label{thm:main-dcs}
For each fixed \(m\ge 2\), there exist only finitely many DCSs whose moduli satisfy
\eqref{eq:basicas}.
\end{theorem}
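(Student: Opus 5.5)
Our plan is to bound the largest modulus \(N:=n_t\) in terms of \(m\) alone. Since every modulus divides \(\operatorname{lcm}\) of the moduli and \(t\) is bounded once \(N\) is (the moduli below \(N\) are distinct divisors-related integers at most \(N\), and the density identity \(\sum_i 1/n_i=1\) fixes the count), a bound \(N\le B(m)\) yields finiteness. The basic tool is the generating-function argument of Mirsky--Newman--Davenport--Rado: disjoint covering means
\[
\sum_{i=1}^t \frac{z^{a_i}}{1-z^{n_i}}=\frac{1}{1-z},
\]
and evaluating near a primitive \(d\)-th root of unity \(\zeta\) (for \(d\mid N\), \(d>1\)) shows that the sum of \(\zeta^{a_i}\) over the \(m\) progressions of modulus \(N\) (plus contributions of other moduli divisible by \(d\) at the pole order) must cancel. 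More precisely, for each \(d>1\), the residues \(a_i\) of all progressions whose modulus is divisible by \(d\), weighted appropriately, produce vanishing sums of roots of unity. Taking \(d=N\) only the \(m\) repeated progressions contribute to the top-order pole, so \(\sum_{j=t-m+1}^{t}\zeta^{a_j}=0\) for every primitive \(N\)-th root \(\zeta\). This is a vanishing sum of \(m\) roots of unity of order dividing \(N\).

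The key step is then to use the structure theory of vanishing sums of roots of unity (Lam--Leung / de Bruijn / Rédei): a vanishing sum of \(m\) \(N\)-th roots of unity valid simultaneously for all primitive \(\zeta\) means the multiset \(\{a_j \bmod N\}\) has generating polynomial divisible by \(\Phi_N\); by de Bruijn this polynomial is a \(\Z_{\ge0}\)-combination of ``coset polynomials'' \(z^{c}(1+z^{N/p}+\dots+z^{(p-1)N/p})\) for primes \(p\mid N\) (when \(N\) has at most two prime factors, and in general a combination with integer coefficients of bounded complexity). Hence the \(m\) repeated classes group into full cosets of subgroups of order \(p\le m\), and in particular every prime \(p\) used satisfies \(p\le m\). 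Merging each such coset of \(p\) classes mod \(N\) into one class mod \(N/p\) produces a new DCS with largest modulus \(N/p\), at the cost of possibly colliding with an existing modulus \(N/p\) among the \(n_i\) (at most one, by distinctness). We then induct: define a descent that repeatedly merges, tracking the multiplicity of the current top modulus, and show it remains bounded by a function of \(m\) (each merge of a \(p\)-coset reduces \(p\) copies to one and adds at most one existing progression). The process terminates at a trivial or small system; reversing it, \(N\) is obtained from a bounded-size base by boundedly many multiplications by primes \(\le m\).

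The main obstacle is controlling the descent: after merging, the new top modulus may appear with multiplicity that is not obviously bounded, and new moduli below it need not stay distinct, so the hypothesis \eqref{eq:basicas} is not preserved. I would first try to prove a bound on the number of descent steps using the normalization \(n_1\ge 3\): each step divides \(N\) by a prime \(\le m\), and because the small moduli are distinct with \(\sum 1/n_i<1-m/N\), the density \(m/N\) of the top layer must grow geometrically along the descent while staying below \(1\) — giving \(N\le m\cdot C(m)\). Ensuring that the \(2\)-adic chain (the infinite family \eqref{eq:trivial}) is exactly what \(n_1\ge3\) excludes is where I expect the real difficulty, likely requiring the Znám-type bound on the number of distinct moduli sharing a given \(p\)-part.
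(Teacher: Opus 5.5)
Your proposal has a genuine gap, located where you suspect it, and the suggested repair does not close it. The Mirsky--Newman step is correct: only the $m$ progressions of modulus $N$ have poles at a primitive $N$-th root of unity, so $\Phi_N$ divides $\sum_{j>t-m} z^{a_j \bmod N}$. For $N$ with at most two prime factors, de Bruijn's theorem does split these classes into prime-order cosets. But the descent cannot bound $N$.

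\begin{itemize}
\item \textbf{Termination is uninformative.} Each merge lowers the number of classes. So whenever merging can be carried out, the descent simply runs down to the one-class system.
\item \textbf{The density argument is circular.} The top-layer density $m_k/N_k$ staying below $1$ only bounds the number of steps by a logarithm of $N$ itself, not by a function of $m$. Indeed, the argument as sketched never uses $n_1\ge 3$. The family \eqref{eq:trivial} is exactly a descent with $m_k=2$ at every stage, geometrically growing top-layer density, and arbitrarily many steps.
\item \textbf{Multiplicities are not controlled.} The claim that each merge ``adds at most one existing progression'' holds only at the first step. Once cosets of different primes are merged, a level can receive classes from several higher levels (e.g.\ $N/6$ from both $N/2$ and $N/3$). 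So neither \eqref{eq:basicas} nor a bound $m_k\le m$ is preserved, and the primes used at later stages are bounded only by $m_k$.
\item \textbf{Merging can fail outright.} For $N$ with three or more prime factors, $z^5+z^6+z^{12}+z^{18}+z^{24}+z^{25}$ is divisible by $\Phi_{30}$ but contains no translate of a prime-order coset. An integer-coefficient decomposition lets you merge nothing.
\end{itemize}

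The missing ingredient is a constraint at \emph{every} divisor level, not only at the top. The paper obtains it from the parallelotope model. For every coset $K$, the other cosets whose sizes divide $|K|$ have total size at least $(p_1(n/|K|)-1)|K|$ (Corollary~\ref{cor:coset-bff}). A two-level version (Corollary~\ref{cor:coset-twolevel}) handles the case $p_1(n/|K|)=2$. Since all sizes $>1$ are distinct, this becomes a divisor-sum inequality $N_K-\sigma(N_K/p_h)\le C(m)$. Via a Mertens-type estimate, that inequality bounds the relevant largest prime and its exponent (Proposition~\ref{prop:largest-prime}). A nested induction over prime levels and divisor classes then bounds every prime and exponent of $n$. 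The hypothesis $n_1\ge 3$ enters exactly when $n/|K|$ is a power of $2$, through $|K|\le n/3$ (Proposition~\ref{prop:bound-v2-even}), and that is what rules out the $2$-adic chain.
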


To prove Theorem~\ref{thm:main-dcs}, it is convenient to pass to an equivalent formulation in terms of coset partitions.
Let \(n:=\mathrm{lcm}(n_1,\dots,n_t)\). Reducing modulo \(n\), each progression \(a_i+n_i\Z\) corresponds to a coset
\(K_i\subset \Z_n\) of the subgroup of index \(n_i\), hence \(|K_i|=n/n_i\). Moreover, it is known that \eqref{eq:basicas} implies \(n_i \mid n_t\) for all \( 1\le i \le t\) (see \cite{BFF}), hence \(n_t = n\). In this way, determining DCSs satisfying
\eqref{eq:basicas} is equivalent to determining coset partitions
\[
\Cset=(K_1,\ldots,K_t)
\]
of the additive cyclic group \(\Z_n\) satisfying
\begin{equation}\label{eq:basicascoset}
\frac{n}{3} \ge |K_1| > |K_2| > \cdots > |K_{t-m}| > |K_{t-m+1}|=\cdots=|K_t| = 1.
\end{equation}
In particular, the repeated largest modulus corresponds to the \(m\) singleton cosets in \eqref{eq:basicascoset}.

Our main result is most naturally stated in the coset-partition setting.

\begin{theorem}\label{thm:main-coset}
Let \(\Cset=(K_1,\ldots,K_t)\) be a coset partition of \(\Z_n\) satisfying \eqref{eq:basicascoset}.
Then the integer \(n\) is bounded by a constant depending only on \(m\).
\end{theorem}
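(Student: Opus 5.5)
We bound \(n\) through the sizes of the \(t-m\) large cosets, which are pairwise distinct. Write \(|K_i| = n/n_i\) with \(3 \le n_1 < \dots < n_{t-m} < n\) and every \(n_i \mid n\). Counting elements gives
\[
\sum_{i=1}^{t-m} \frac{1}{n_i} + \frac{m}{n} = 1 .
\]
The plan has two parts. First, show that the number \(t-m\) of non-singleton cosets is bounded in terms of \(m\). Second, show that the smallest "gap" forced by distinctness makes \(m/n\) large unless \(n\) is bounded.

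\textbf{Step 1: reduction to the last step of the partition.} Consider the union \(S\) of the \(m\) singletons. Its complement \(\Z_n\setminus S\) is partitioned by cosets of subgroups \(d_i\Z_n\) with \(d_i = n_i\) distinct. I would work one prime at a time. For a prime \(p \mid n\), project onto \(\Z_n/(n/p)\Z_n\), i.e. onto the \(p\)-fibres \(x + (n/p)\Z_n\) of size \(p\). Every coset \(K_i\) with \(n_i \nmid n/p\) meets each fibre in at most one point. Every coset with \(n_i \mid n/p\) is a union of whole fibres. Look at a fibre that contains a singleton. It is covered by singletons together with points of cosets whose modulus has full \(p\)-adic valuation \(v_p(n)\). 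This is the classical Newman–Znám style argument: the number of cosets of maximal \(p\)-part, counted with fibre multiplicity, is at least \(p\). This yields the standard inequality that the largest modulus occurs at least \(p(n)\) times, generalized so that \(m + \#\{i : v_p(n_i) = v_p(n)\} \ge p\) for every prime \(p \mid n\).

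\textbf{Step 2: bounding the primes and exponents.} Distinct moduli sharing the maximal \(p\)-part \(p^{v_p(n)}\) are of the form \(p^{v_p(n)}\cdot e\) with distinct \(e \mid n/p^{v_p(n)}\). I would use the Möbius / character identity (the Fourier argument of Mirsky–Newman). For a nontrivial character \(\chi\) of order dividing \(n\), consider \(\sum_{x \in S} \chi(x) = -\sum_i \sum_{x\in K_i}\chi(x)\). The terms vanish unless \(\chi\) is trivial on the subgroup of \(K_i\). Take \(\chi\) of exact order \(n\). Then only the singletons contribute on the left, while no \(K_i\) with \(|K_i|>1\) contributes on the right. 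Hence the \(m\) singletons satisfy \(\sum_{x\in S}\zeta_n^{x}=0\). More generally, for each divisor \(d\) of \(n\) with \(d > n_{t-m}\), we get \(\sum_{x\in S}\zeta_d^{x}=0\). A vanishing sum of \(m\) roots of unity of order \(n\) forces, by Lam–Leung/Mann's theorem on minimal vanishing sums, that after rotation the sum lives in \(\mu_N\) with \(N\) squarefree and composed of primes \(\le m\). I would combine this with the translates over all large divisors. The aim is to conclude that \(n_{t-m}\) is at least \(n/C(m)\), so that the largest non-singleton modulus is within a bounded factor of \(n\).

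\textbf{Step 3: closing the argument.} Once \(n/n_{t-m} \le C(m)\), I would iterate. The sub-partition of cosets of modulus dividing \(n_{t-m}\) together with the small cosets refines to a structure on \(\Z_{n_{t-m}}\) with boundedly many "defects". Induction on the number of distinct moduli near the top, with the distinct-modulus condition \eqref{eq:basicascoset}, then bounds all ratios \(n/n_i\). Since \(n_1 \ge 3\) rules out the infinite family \eqref{eq:trivial}, the chain of ratios starting from \(n/n_1 \le n/3\) has bounded length, giving \(n\) bounded.

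The main obstacle I expect is Step 2. We must turn the vanishing-sum constraints on the \(m\) singletons into a lower bound on \(n_{t-m}\) relative to \(n\), uniformly across all primes dividing \(n\), including large primes whose exponent might be large. I would first try to handle the prime-power-part of each \(p \le m\) separately using the fibre counting of Step 1. For primes \(p > m\), Step 1 forces at least \(p-m\) distinct moduli of maximal \(p\)-part. I would then try to show this is incompatible with the density identity once \(p\) is large.
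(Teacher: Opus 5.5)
Your plan has a genuine gap: no step actually produces an upper bound for \(n\). The first announced goal, that \(t-m\) is bounded, is never argued. Step~2 ends with an aim rather than a conclusion. Step~3 asserts that the iteration ``has bounded length'', which is essentially the statement to be proved.

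The Fourier input you list does not deliver Step~2. The relation \(\sum_{x\in S}\zeta_d^{x}=0\) is forced only for \(d\mid n\) dividing none of \(n_1,\dots,n_{t-m}\). So all you know is that \(\widehat{\mathbf 1_S}\) is supported on the union of the subgroups of the dual group of orders \(n_1,\dots,n_{t-m}\).
\begin{itemize}
\item When \(n\) is a prime power, this union is the single subgroup of order \(n_{t-m}\). Then \(S\) is a union of cosets of the subgroup of order \(n/n_{t-m}\), so \(n/n_{t-m}\le m\).
\item In general it is a union of several subgroups, and no invariance of \(S\) under a large subgroup follows.
\end{itemize}
Mann/Lam--Leung does not repair this. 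It applies to each \emph{minimal} vanishing subsum separately: each lies in a coset of a subgroup of squarefree order with prime factors \(\le m\), with unrelated rotations. So the whole sum need not live in one rotated \(\mu_N\). In any case it gives no control over large powers of small primes dividing \(n\).

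Even granting \(n/n_{t-m}\le C(m)\), Step~3 cannot close as described. The 2-add family \eqref{eq:trivial} has every consecutive ratio equal to \(2\) and unbounded \(n\). So the normalization \(n_1\ge 3\) has to be used quantitatively, not just to exclude that one family.

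Your fibre count \(m+\#\{i:\nu_p(n_i)=\nu_p(n)\}\ge p\) is correct, but it is a \emph{lower} bound on the number of cosets with full \(p\)-part. Those cosets have distinct sizes dividing \(n/p^{\nu_p(n)}\). The global identity \(\sum_i 1/n_i+m/n=1\) does not by itself rule out large \(p\).

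The missing idea is a local inequality comparing a coset with the cosets whose sizes divide its size. In the paper this works as follows.
\begin{itemize}
\item Corollary~\ref{cor:coset-bff} is applied to a coset \(K\) of maximal \(p_h\)-exponent in a class \(\Lset_h(Q)\).
\item Together with distinctness of sizes, this gives \(N_K-\sigma(N_K/p_h)\le C(m)\), where \(N_K=|K|/Q\).
\item A Mertens-type bound on \(\sigma(M)/M\) (Proposition~\ref{prop:largest-prime}) then bounds \(p_h\) and its exponent.
\item For \(p_h=2\) that inequality is vacuous. This is exactly where the two-level variant (Corollary~\ref{cor:coset-twolevel}) and \(|K|\le n/3\) enter, i.e.\ where \(n_1\ge 3\) is used.
\item These local bounds are propagated through a nested induction over prime levels \(h\) and divisor classes \(Q\).
\end{itemize}
Your proposal contains no substitute for this mechanism, so as it stands it does not prove the theorem.
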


There is another aspect of Theorem~\ref{thm:main-coset}, which connects the disjoint setting to quantitative
lower bounds for uncovered sets in distinct-moduli covering problems.

\begin{remark}\label{rem:balister-simpson}
Let \((a_i+n_i\Z)_{i=1}^t\) be a DCS satisfying \eqref{eq:basicas} and write \(n:= \mathrm{lcm}(n_1,\dots,n_t)\).
Removing the \(m\) progressions with the repeated largest modulus \(n\) leaves a family of arithmetic progressions with
\emph{distinct} moduli \(n_1,\dots,n_{t-m}\) whose uncovered set
\[
R:=\Z\setminus \bigcup_{i=1}^{t-m} (a_i+n_i\Z)
\]
has asymptotic density
\[
\delta(R)=\frac{m}{n}.
\]

Balister, Bollobás, Morris, Sahasrabudhe, and Tiba \cite{BBMST} proved that for every \(\varepsilon>0\) there exists \(M=M(\varepsilon)\) such that
for any collection of arithmetic progressions with \emph{distinct} moduli \(d_1,\dots,d_k\ge M\), the uncovered set has density
at least \(\tfrac12 e^{-4C}\), where \(C=\sum_{i=1}^k \mu(d_i)/d_i\) for an explicit multiplicative function \(\mu\)
(depending on \(\varepsilon\)).
Applying this to the distinct-moduli system above in the additional regime \(n_1\ge M\) yields
\[
\frac{m}{n}=\delta(R)\ \ge\ \frac12 e^{-4C},
\qquad\text{hence}\qquad
n\ \le\ 2m\,e^{4C}.
\]

To compare with our finiteness theorem, note that Simpson \cite{Simpson} proved that in a DCS in which
each modulus occurs at most \(m\) times,
\[
m\prod_{p < p^*} \frac{p}{p-1} \ge p^* 
\]
holds where \(p^*\) is the greatest prime dividing any modulus. It follows that the primes dividing the moduli \(n_1,\dots,n_{t-m}\) are all \(\le p^*(m)\) for some function \(p^*(m)\) (which we will recover later in our specific setting \eqref{eq:basicas} in Lemma~\ref{lem:bound-vh-general}) by Mertens’ theorem
\[
\prod_{p < p^*} \frac{p}{p-1} \sim e^\gamma \log p^* 
\]
where \(\gamma\) is Euler's constant.
Since the multiplicative function \(\mu\) in \cite{BBMST} satisfies \(\mu(p^i) = \mu(p)\), the quantity 
\(C\) is then bounded in terms of \(m\) as well.
Consequently, in the large-minimum-modulus regime \(n_1\ge M\) one obtains a bound of the form \(n\le N(m)\) from the density estimate of Balister et al. together with Simpson's prime-factor bound.

Theorem~\ref{thm:main-coset} shows that, in the disjoint setting, this boundedness already holds under the minimal
normalization \(n_1\ge 3\), i.e.\ without any largeness assumption on the moduli.
\end{remark}

\section{Parallelotope partitions and cell inequalities}\label{sec:parallelotope}

This section recalls the ``parallelotope model'' of Berger, Felzenbaum, and Fraenkel~\cite{BFF}
and proves a slightly more flexible inequality that contains both Proposition~I in \cite[\S2]{BFF} and
a variant used later in this paper as immediate corollaries. Although we present the inequality in a slightly more flexible form, the underlying idea is essentially the same as in the proof of \cite{BFF}.

\subsection{Parallelotopes, cells, and cell partitions}

Fix an integer \(d\ge 1\) and a vector \(\mathbf{b}=(b_1,\dots,b_d)\) with \(b_i\in\N\) and \(b_i\ge 2\).
We consider the discrete box
\[
  \mathcal{P}(\mathbf{b})
  :=\prod_{i=1}^d \{0,1,\dots,b_i-1\}\subset \Z^d,
\]
and we call \(\mathcal{P}(\mathbf{b})\) the \emph{\((d;\mathbf{b})\)-parallelotope}.

Following~\cite{BFF}, for \(T\subseteq [d]:=\{1,\dots,d\}\) and \(u=(u_1,\dots,u_d)\in \mathcal{P}(\mathbf{b})\), we define the
\emph{\(T\)-cell} (or \emph{cell of index \(T\)}) by
\[
  X(u,T):=\{c=(c_1,\dots,c_d)\in \mathcal{P}(\mathbf{b}) : c_i=u_i\ \text{for all } i\notin T\}.
\]
We set \(\ind(X(u,T)):=T\). Note that coordinates in \(T\) are the \emph{free} coordinates, and those in \([d]\setminus T\) are fixed.
The cardinality of a cell depends only on its index:
\[
  |X(u,T)|=\prod_{i\in T} b_i.
\]
We may suppress \(u\) and \(T\) from the notation when they are clear.
A \emph{cell partition} of \(\mathcal{P}(\mathbf{b})\) is a finite family of cells
\[
  \mathcal{F}=(X_1,\dots,X_t)
\]
such that \(\mathcal{P}(\mathbf{b})=\bigsqcup_{j=1}^t X_j\). We always assume \(t>1\).

\medskip
We will repeatedly pass to coordinatewise ``shrunk'' sub-parallelotopes.
If \(\hat{\mathbf{b}}=(\hat b_1,\dots,\hat b_d)\) satisfies \(2 \le \hat b_i\le b_i\) for all \(i\), set
\[
  \widehat{\mathcal{P}}:=\mathcal{P}(\hat{\mathbf{b}})\subseteq \mathcal{P}(\mathbf{b}).
\]
For a cell \(X=X(u,T)\), the intersection \(\hat X:=X\cap \widehat{\mathcal{P}}\) is either empty or again a \(T\)-cell inside
\(\widehat{\mathcal{P}}\) (with the same index \(T\)). In particular, if we only shrink coordinates in \([d]\setminus T\),
then \(\hat X=X\) whenever \(\hat X\neq \emptyset\).
We freely identify \(\mathcal{P}(\mathbf{b})\) with the finite abelian group
\[
  G(\mathbf{b}) := \prod_{i=1}^d \Z_{b_i}
\]
via the obvious coordinatewise reduction map. In particular, translation by an element of \(G(\mathbf{b})\)
acts on \(\mathcal{P}(\mathbf{b})\), preserving cardinalities of subsets and the cell-partition structure.

\subsection{Cell inequalities}

The key observation in~\cite{BFF} is that after shrinking certain \emph{fixed} coordinates to a common modulus \(b\),
one can partition the shrunk parallelotope into \(b\) congruence classes and compare the total mass of cells lying in each class.

\begin{proposition}\label{prop:gen-cell-ineq}
Let \(\mathcal{F}=(X_1,\dots,X_t)\) be a cell partition of \(\mathcal{P}(\mathbf{b})\).
Fix a cell \(X\in\mathcal{F}\).
Let \(S\subseteq [d]\setminus \ind(X)\) be any subset of the \emph{fixed} coordinates of \(X\), and assume
\[
  R:= [d]\setminus (\ind(X) \cup S)\neq \emptyset.
\]
Define
\[
  b:=\min\{\,b_i : i\in R\,\}.
\]
Then one has the inequality
\begin{equation}\label{eq:gen-ineq-including}
  \sum_{\ind(X_j)\subseteq \ind(X) \cup S} |X_j|\ \ge\ b\,|X|.
\end{equation}
Equivalently,
\begin{equation}\label{eq:gen-ineq-excluding}
  \sum_{\substack{\ind(X_j)\subseteq \ind(X)\cup S\\ X_j\neq X}} |X_j|\ \ge\ (b-1)\,|X|.
\end{equation}
\end{proposition}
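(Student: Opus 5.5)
The plan is the Berger–Felzenbaum–Fraenkel ``shrink and count residues'' argument: restrict to a sub-parallelotope in which the coordinates of \(R\) all have length exactly \(b\), and split it into \(b\) residue classes.

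\textbf{Normalization.} Translation by an element of \(G(\mathbf{b})\) preserves the cell partition, cardinalities and indices. We therefore translate so that the fixed coordinates of \(X=X(u,\ind(X))\) satisfy \(u_i=0\) for every \(i\in R\). This is possible because \(R\subseteq[d]\setminus\ind(X)\).

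\textbf{Shrinking.} Define \(\hat{\mathbf b}\) by \(\hat b_i=b\) for \(i\in R\) and \(\hat b_i=b_i\) otherwise; note \(2\le b\le b_i\) for \(i\in R\). Put \(\widehat{\mathcal P}=\mathcal P(\hat{\mathbf b})\). The sets \(\hat X_j:=X_j\cap\widehat{\mathcal P}\), after discarding empty ones, form a cell partition of \(\widehat{\mathcal P}\) with the same indices. Since we only shrank coordinates in \(R\), which are fixed for \(X\), and \(u_i=0\in\{0,\dots,b-1\}\) for \(i\in R\), we get \(\hat X=X\).

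\textbf{Residue classes.} Define \(\varphi:\widehat{\mathcal P}\to\Z_b\) by \(\varphi(c)=\sum_{i\in R}c_i \bmod b\), and for \(r\in\Z_b\) let \(C_r=\varphi^{-1}(r)\). Fix any \(i_0\in R\), which exists because \(R\ne\emptyset\). Each line in direction \(i_0\) meets each \(C_r\) in exactly one point, so \(|C_r|=|\widehat{\mathcal P}|/b\) for every \(r\). Now split the cells \(\hat X_j\) into two types.
\begin{itemize}
\item If \(\ind(X_j)\cap R\ne\emptyset\), pick \(i\in\ind(X_j)\cap R\). Then \(\hat X_j\) is a union of full lines in direction \(i\), each of length \(b\), so \(|\hat X_j\cap C_r|=|\hat X_j|/b\) for all \(r\).
\item If \(\ind(X_j)\subseteq\ind(X)\cup S\), then all coordinates in \(R\) are fixed on \(\hat X_j\). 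Hence \(\varphi\) is constant on \(\hat X_j\), and \(\hat X_j\) lies in a single class.
\end{itemize}

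\textbf{Counting.} Subtract the equidistributed contribution of the first type from \(|C_r|\). This shows that the total mass \(M_r\) of second-type cells lying in \(C_r\) is independent of \(r\). The cell \(X\) itself is of the second type, since \(\ind(X)\subseteq\ind(X)\cup S\), and it lies in \(C_0\). Therefore \(M_r=M_0\ge|X|\) for all \(r\), and so
\[
\sum_{\ind(X_j)\subseteq\ind(X)\cup S}|X_j|\ \ge\ \sum_{\ind(X_j)\subseteq\ind(X)\cup S}|\hat X_j|\ =\ \sum_{r\in\Z_b}M_r\ \ge\ b|X|.
\]
This is \eqref{eq:gen-ineq-including}. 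Removing the term \(X_j=X\) gives \eqref{eq:gen-ineq-excluding}.

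\textbf{Main obstacle.} The delicate step is the dichotomy of cell types. Equidistribution needs a free coordinate lying in \(R\) whose length is exactly \(b\) after shrinking; this is why every coordinate of \(R\) is shrunk to the minimum \(b\). Conversely, a cell must be constant under \(\varphi\) whenever it has no free coordinate in \(R\), which is exactly the condition \(\ind(X_j)\subseteq\ind(X)\cup S\). The translation making \(\hat X=X\) is the other point to check, and it is also what guarantees \(M_0\ge|X|\).
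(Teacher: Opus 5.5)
Your proof is correct and follows essentially the same route as the paper's. You shrink every coordinate in \(R\) to length \(b\), split the shrunk box into \(b\) classes via \(\sum_{i\in R}c_i \bmod b\), use equidistribution of the cells with a free coordinate in \(R\) to force equal residual mass in each class, and conclude that each class carries at least \(|X|\); your normalization \(u_i=0\) for \(i\in R\) is just a concrete form of the paper's translation making \(X\cap\widehat{\mathcal{P}}\neq\emptyset\).
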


\begin{proof}
Define \(\hat{\mathbf{b}}=(\hat b_1,\dots,\hat b_d)\) by
\[
  \hat b_i :=
  \begin{cases}
    b_i, & i\in \ind(X) \cup S,\\
    b,   & i\in R.
  \end{cases}
\]
Let \(\widehat{\mathcal{P}}:=\mathcal{P}(\hat{\mathbf{b}})\subseteq \mathcal{P}(\mathbf{b})\).
By translating in \(G(\mathbf{b})\),
we may assume \(X\cap \widehat{\mathcal{P}}\neq\emptyset\); then \(\hat X:=X\cap \widehat{\mathcal{P}}\) is a cell in \(\widehat{\mathcal{P}}\),
and because we only shrank coordinates in \([d]\setminus \ind(X)\), we have \(\hat X = X\).

For each \(j\), write \(\hat X_j:=X_j\cap \widehat{\mathcal{P}}\); then the nonempty \(\hat X_j\) form a cell partition of \(\widehat{\mathcal{P}}\).

Now partition \(\widehat{\mathcal{P}}\) into \(b\) congruence classes:
for \(0\le k\le b-1\), set
\[
  \mathcal{G}_k := \Bigl\{\,c=(c_1,\dots,c_d)\in \widehat{\mathcal{P}} : \sum_{i\in R} c_i \equiv k \pmod b\,\Bigr\}.
\]
Then \(\widehat{\mathcal{P}}=\bigsqcup_{k=0}^{b-1}\mathcal{G}_k\) and \(|\mathcal{G}_k|=|\widehat{\mathcal{P}}|/b\) for all \(k\).

Consider the contribution of the cells with \(\ind(X_j)\subseteq \ind(X) \cup S\).
If \(\ind(X_j)\subseteq \ind(X) \cup S\), then \(R\subseteq [d]\setminus \ind(X_j)\), i.e.\ every coordinate in \(R\) is fixed inside \(X_j\),
hence \(\hat X_j\) is contained in a \emph{single} \(\mathcal{G}_k\) (and in that case \(\hat X_j=X_j\) because we shrank only fixed coordinates of \(X_j\)).
On the other hand, if \(\ind(X_j)\nsubseteq \ind(X)\cup S\), then \(\ind(X_j)\) contains some coordinate of \(R\),
so \(\hat X_j\) varies in at least one coordinate in \(R\) of range \(\{0,\dots,b-1\}\); it follows by a direct counting argument that
\(|\hat X_j\cap \mathcal{G}_k|=|\hat X_j|/b\) for every \(k\).

Let
\[
  A_k := \sum_{\substack{\ind(X_j)\subseteq \ind(X)\cup S\\ \hat X_j\subseteq \mathcal{G}_k}} |\hat X_j|
  \qquad(0\le k\le b-1).
\]
Since \(\sum_j |\hat X_j\cap \mathcal{G}_k| = |\mathcal{G}_k|\) for each \(k\), and the cells with \(\ind(X_j)\nsubseteq \ind(X)\cup S\)
contribute the \emph{same} amount to each \(\mathcal{G}_k\) (namely \(|\hat X_j|/b\)), the quantities \(A_k\) must be equal for all \(k\).
Let \(k_0\) be such that \(\hat X\subseteq \mathcal{G}_{k_0}\). Then \(A_{k_0}\ge |\hat X|=|X|\), hence for every \(k\),
\(A_k=A_{k_0}\ge |X|\). Summing over \(k\) gives
\[
  \sum_{\ind(X_j)\subseteq \ind(X)\cup S} |X_j|
  \;\ge\;\sum_{\ind(X_j)\subseteq \ind(X) \cup S} |\hat X_j|
  \;=\;\sum_{k=0}^{b-1} A_k
  \;\ge\; b\,|X|,
\]
which is \eqref{eq:gen-ineq-including}. Subtracting \(|X|\) from both sides yields \eqref{eq:gen-ineq-excluding}.
\end{proof}

\subsection{Two immediate corollaries}

To emphasize how Proposition~\ref{prop:gen-cell-ineq} packages the two inequalities used later, we record them as corollaries.

\begin{corollary}\label{cor:bff-prop1-including}
In the setting of Proposition~\ref{prop:gen-cell-ineq}, take \(S=\emptyset\) and set
\[
  b_1(X):=\min\{\,b_i : i\notin \ind(X)\,\}.
\]
Then
\[
  \sum_{\ind(X_j)\subseteq \ind(X)} |X_j|\ \ge\ b_1(X)\,|X|.
\]
Equivalently,
\[
  \sum_{\substack{\ind(X_j)\subseteq \ind(X)\\ X_j\neq X}} |X_j|\ \ge\ (b_1(X)-1)\,|X|,
\]
which recovers the formulation appearing in \cite[\S2]{BFF}.
\end{corollary}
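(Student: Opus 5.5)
This is a direct specialization of Proposition~\ref{prop:gen-cell-ineq}, and the plan is simply to instantiate it with \(S=\emptyset\) and check that the hypotheses and the constant match.

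Take \(S=\emptyset\), which is trivially a subset of the fixed coordinates \([d]\setminus\ind(X)\). Then \(R=[d]\setminus\ind(X)\), and the proposition's constant \(b=\min\{b_i : i\in R\}\) is exactly \(b_1(X)\) as defined in the corollary. Also \(\ind(X)\cup S=\ind(X)\), so \eqref{eq:gen-ineq-including} becomes the first displayed inequality verbatim. Likewise \eqref{eq:gen-ineq-excluding} becomes the second.

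The only hypothesis to verify is \(R\neq\emptyset\), i.e.\ \(\ind(X)\neq[d]\). If \(\ind(X)=[d]\), then \(X\) is the whole parallelotope \(\mathcal{P}(\mathbf{b})\). Since the members of \(\mathcal{F}\) are nonempty and disjoint, this forces \(t=1\), contradicting the standing assumption \(t>1\). Hence \(R\neq\emptyset\), and in particular \(b_1(X)\) is well defined as a minimum over a nonempty set.

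The equivalence of the two displayed forms follows because \(X\) itself is one of the cells with \(\ind(X_j)\subseteq\ind(X)\), so subtracting \(|X|\) from both sides passes between them. The phrase ``recovers \cite[\S2]{BFF}'' is just the identification with Proposition~I there and needs no argument. There is no real obstacle; the only point needing care is the nonemptiness of \(R\), which is handled by \(t>1\) as above.
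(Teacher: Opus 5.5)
Your proposal is correct and matches the paper. The paper treats this corollary as the immediate specialization \(S=\emptyset\) of Proposition~\ref{prop:gen-cell-ineq}, so that \(R=[d]\setminus\ind(X)\) and \(b=b_1(X)\). Your extra check that \(R\neq\emptyset\) (if \(\ind(X)=[d]\), the cell \(X\) is all of \(\mathcal{P}(\mathbf{b})\), which is impossible when \(t>1\) because cells are nonempty) is a detail the paper leaves implicit, and your argument for it is right.
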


\begin{corollary}\label{cor:two-level-variant}
In the setting of Proposition~\ref{prop:gen-cell-ineq}, fix \(X\in\mathcal{F}\) and let \(b_1(X)\) be as above.
Define the set of \emph{minimal fixed coordinates}
\[
  F_1(X):=\{\, i\notin \ind(X) : b_i=b_1(X)\,\}\subseteq [d]\setminus \ind(X).
\]
If \([d]\setminus (\ind(X)\cup F_1(X))\neq\emptyset\), define the \emph{next} side length
\[
  b_2(X):=\min\{\,b_i : i\notin \ind(X)\cup F_1(X)\,\}.
\]
Then
\[
  \sum_{\ind(X_j)\subseteq \ind(X)\cup F_1(X)} |X_j|\ \ge\ b_2(X)\,|X|.
\]
Equivalently,
\[
  \sum_{\substack{\ind(X_j)\subseteq \ind(X)\cup F_1(X)\\ X_j\neq X}} |X_j|
  \ \ge\ (b_2(X)-1)\,|X|.
\]
\end{corollary}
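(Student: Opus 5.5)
This is a direct specialization of Proposition~\ref{prop:gen-cell-ineq}, with the choice \(S:=F_1(X)\). By definition \(F_1(X)\subseteq [d]\setminus \ind(X)\), so it is a legitimate subset of the fixed coordinates of \(X\).

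First I will check the nonemptiness hypothesis. We have
\[
R=[d]\setminus(\ind(X)\cup F_1(X)),
\]
which is nonempty by the assumption under which \(b_2(X)\) is defined.

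Next I will identify the constant \(b=\min\{b_i: i\in R\}\) produced by the proposition. With this choice of \(R\), it is literally the definition of \(b_2(X)\).

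Substituting into \eqref{eq:gen-ineq-including} then gives
\[
\sum_{\ind(X_j)\subseteq \ind(X)\cup F_1(X)}|X_j|\ \ge\ b_2(X)\,|X|.
\]
For the equivalent form, note that \(X\) itself satisfies \(\ind(X)\subseteq \ind(X)\cup F_1(X)\), so \(X\) appears in this sum. Subtracting \(|X|\) from both sides, as in \eqref{eq:gen-ineq-excluding}, yields the second inequality.

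There is no real obstacle. The only point to confirm is that the hypotheses of Proposition~\ref{prop:gen-cell-ineq} (that \(S\) consists of fixed coordinates and that \(R\neq\emptyset\)) hold, and both were checked above.
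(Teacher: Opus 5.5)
Your proposal is correct and matches the paper's intended argument. The corollary is exactly Proposition~\ref{prop:gen-cell-ineq} with \(S=F_1(X)\), which gives \(R=[d]\setminus(\ind(X)\cup F_1(X))\neq\emptyset\) and \(b=b_2(X)\); the paper records it as an immediate specialization without a separate proof.
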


\begin{remark}
More generally, Proposition~\ref{prop:gen-cell-ineq} lets one ``declare as harmless'' \emph{any} set \(S\) of fixed coordinates of \(X\)
and forces a lower bound with the \emph{minimum} remaining side length outside \(\ind(X) \cup S\).
This is exactly the structural flexibility needed to keep later inductions uniform when \(p_1=2\),
by absorbing the smallest fixed side lengths into \(S\) and letting the inequality detect the next layer.
\end{remark}

\subsection{Coset-partition formulation of the cell inequalities}

Let \(n=\prod_{j=1}^{\ell} p_j^{s_j}\) with primes \(p_1<\cdots<p_\ell\), and set
\[
\mathbf b=(\underbrace{p_1,\dots,p_1}_{s_1},\underbrace{p_2,\dots,p_2}_{s_2},\dots,\underbrace{p_\ell,\dots,p_\ell}_{s_\ell}).
\]

We recall the parallelotope function in \cite{BFF}.
For \(k\in \mathbb Z_n\), write its residue modulo \(p_j^{s_j}\) in base \(p_j\) as
\[
k \equiv \sum_{r=1}^{s_j} b_r^{(j)} p_j^{\,s_j-r}\pmod{p_j^{s_j}},
\qquad
b_r^{(j)}\in\{0,\dots,p_j-1\}.
\]
Define
\[
\varphi(k):=(\varphi^{(1)}(k),\dots,\varphi^{(\ell)}(k)),
\qquad
\varphi^{(j)}(k):=(b_1^{(j)},\dots,b_{s_j}^{(j)})\in [0,p_j-1]^{s_j}.
\]
Then \(\varphi:\mathbb Z_n\to \mathcal P(\mathbf b)\) is a bijection.
If \(K\subseteq \mathbb Z_n\) is a coset and
\[
|K| = \prod_{j=1}^{\ell} p_j^{r_j},
\]
then \(\varphi(K)\) is a cell with index set
\[
\ind(\varphi(K))
=
\bigcup_{j=1}^{\ell}
\left\{
\sum_{i<j} s_i + 1,\dots,\sum_{i<j} s_i + r_j
\right\},
\]
and \(|\varphi(K)|=|K|\). Conversely, the preimage of every cell of this form is a coset.

Moreover, if \(K, K'\subseteq \mathbb Z_n\) are cosets, then
\begin{equation}
\label{eq:index-inclusion}
  \ind(\varphi(K'))\subseteq \ind(\varphi(K))
  \quad\Longleftrightarrow\quad
  |K'|\mid |K|.
\end{equation}

Let \(\mathcal C=(K_1,\dots,K_t)\) be a coset partition of \(\Z_n\). For a prime \(p\), let \(\nu_p(\cdot)\) denote the \(p\)-adic valuation.

\begin{corollary}\label{cor:coset-bff}
Fix \(K\in\mathcal C\) and put
\[
  p_1(n/|K|):=\min\{\,p \text{ prime} : p\mid n/|K|\,\}.
\]
Then
\[
  \sum_{\substack{K_i\in\mathcal C\\ |K_i|\mid |K|}} |K_i|
  \ \ge\ p_1(n/|K|)\,|K|.
\]
Equivalently,
\[
  \sum_{\substack{K_i\in\mathcal C\\ K_i\neq K,\ |K_i|\mid |K|}} |K_i|
  \ \ge\ (p_1(n/|K|)-1)\,|K|.
\]
\end{corollary}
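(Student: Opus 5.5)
We plan to derive this by transporting Corollary~\ref{cor:bff-prop1-including} through the parallelotope bijection \(\varphi:\Z_n\to\mathcal P(\mathbf b)\). First we pass to the cell partition \(\mathcal F=(\varphi(K_1),\dots,\varphi(K_t))\). Since \(\varphi\) is a bijection carrying each coset to a cell of the same cardinality, the disjoint union \(\Z_n=\bigsqcup K_i\) becomes \(\mathcal P(\mathbf b)=\bigsqcup \varphi(K_i)\). We apply Corollary~\ref{cor:bff-prop1-including} to \(X:=\varphi(K)\). This requires \([d]\setminus\ind(X)\neq\emptyset\), that is, \(|K|<n\). This holds because \(t>1\) forces every coset of the partition to be proper.

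Next we translate both sides of the inequality. By \eqref{eq:index-inclusion}, the condition \(\ind(\varphi(K_i))\subseteq\ind(\varphi(K))\) is equivalent to \(|K_i|\mid |K|\). We also have \(|\varphi(K_i)|=|K_i|\). Hence the left-hand side of Corollary~\ref{cor:bff-prop1-including} is exactly \(\sum_{|K_i|\mid|K|}|K_i|\).

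For the right-hand side we must identify \(b_1(X)=\min\{b_i : i\notin\ind(X)\}\). We use the explicit index set: if \(|K|=\prod p_j^{r_j}\), the free coordinates in the \(p_j\)-block are the first \(r_j\) of its \(s_j\) positions. So block \(j\) contributes a fixed coordinate, of side length \(p_j\), precisely when \(r_j<s_j\), i.e.\ when \(p_j\mid n/|K|\). Therefore the set of side lengths of the fixed coordinates is exactly the set of primes dividing \(n/|K|\), and \(b_1(X)=p_1(n/|K|)\). This gives the first inequality.

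The second inequality follows by subtracting \(|K|\), which is the \(K_i=K\) term (note \(|K|\mid|K|\)). The only point needing care is the identification of \(b_1(X)\), together with the fact that \eqref{eq:index-inclusion} uses the canonical ordering of free coordinates within each prime block. Given the stated description of \(\ind(\varphi(K))\), both are immediate, so no real obstacle is expected.
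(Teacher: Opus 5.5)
Your proposal is correct and is essentially the paper's proof: apply Corollary~\ref{cor:bff-prop1-including} to the cell partition $(\varphi(K_1),\dots,\varphi(K_t))$ with $X=\varphi(K)$. The paper then uses $|\varphi(K_i)|=|K_i|$, \eqref{eq:index-inclusion} for the summation range, and $b_1(\varphi(K))=p_1(n/|K|)$. You also spell out the block-by-block identification of $b_1$ and check that $|K|<n$ (so the set of fixed coordinates is nonempty); the paper leaves both points implicit.
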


\begin{corollary}\label{cor:coset-twolevel}
Fix \(K\in\mathcal C\) and let \(p_1(n/|K|)\) be as above.
Assume that \(n/|K|\) has at least two distinct prime divisors, and define
\[
  p_2(n/|K|):=\min\{\,p \text{ prime} : p\mid n/|K|,\ p\neq p_1(n/|K|)\,\},
  \qquad
  e_1(n/|K|):=\nu_{p_1(n/|K|)}(n/|K|).
\]
Then
\[
  \sum_{\substack{K_i\in\mathcal C\\ |K_i|\mid |K|\,p_1(n/|K|)^{e_1(n/|K|)}}} |K_i|
  \ \ge\ p_2(n/|K|)\,|K|.
\]
Equivalently,
\[
  \sum_{\substack{K_i\in\mathcal C\\ K_i\neq K,\ |K_i|\mid |K|\,p_1(n/|K|)^{e_1(n/|K|)}}} |K_i|
  \ \ge\ (p_2(n/|K|)-1)\,|K|.
\]
\end{corollary}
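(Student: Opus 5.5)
The plan is to read Corollary~\ref{cor:coset-twolevel} off from Corollary~\ref{cor:two-level-variant}, transported through the bijection \(\varphi\). Put \(X:=\varphi(K)\) and consider the cell partition \(\mathcal F=(\varphi(K_1),\dots,\varphi(K_t))\) of \(\mathcal P(\mathbf b)\). Since \(\varphi\) preserves cardinalities, the inequality can be verified entirely on the parallelotope side once three things are identified: the quantities \(b_1(X)\), \(F_1(X)\) and \(b_2(X)\), and the summation range.

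\textbf{Side lengths.} Write \(|K|=\prod_j p_j^{r_j}\). The index of \(X\) consists of the first \(r_j\) coordinates in the \(j\)-th block, so the fixed coordinates in block \(j\) number \(s_j-r_j=\nu_{p_j}(n/|K|)\), and each has side length \(p_j\).
\begin{itemize}
\item The fixed coordinates are nonempty because \(K\neq\Z_n\) (we assume \(t>1\)). Hence \(b_1(X)=p_1(n/|K|)\).
\item \(F_1(X)\) is exactly the set of the last \(e_1:=e_1(n/|K|)\) coordinates in the block of the prime \(q_1:=p_1(n/|K|)\).
\item The hypothesis that \(n/|K|\) has a second prime divisor guarantees \([d]\setminus(\ind(X)\cup F_1(X))\neq\emptyset\). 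The minimum side length there is then \(b_2(X)=p_2(n/|K|)\).
\end{itemize}

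\textbf{Summation range.} Note that \(\ind(X)\cup F_1(X)\) is again an initial-segment index set, namely the one attached to the size \(|K|\,q_1^{e_1}\). In the \(q_1\)-block it is all \(s\) coordinates, since \(r+e_1=s\). For each \(K_i\), its cell \(\varphi(K_i)\) has initial-segment index given by the exponents of \(|K_i|\). Comparing blocks shows that \(\ind(\varphi(K_i))\subseteq \ind(X)\cup F_1(X)\) holds iff \(\nu_{p_j}(|K_i|)\le \nu_{p_j}(|K|\,q_1^{e_1})\) for every \(j\), that is, iff \(|K_i|\mid |K|\,q_1^{e_1}\). This is the same argument as \eqref{eq:index-inclusion}, applied to a target size that need not be the size of a member of \(\mathcal C\).

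\textbf{Conclusion.} Substituting these identifications into Corollary~\ref{cor:two-level-variant} and using \(|\varphi(K_i)|=|K_i|\) gives the first inequality. Subtracting \(|K|\) from both sides gives the equivalent form.

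The only point needing care is the summation-range identification, since \(|K|q_1^{e_1}\) need not be the size of any coset in \(\mathcal C\). I would handle it by the direct blockwise comparison above rather than by citing \eqref{eq:index-inclusion} verbatim.
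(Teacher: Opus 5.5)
Your proof is correct and follows the paper's route: you apply Corollary~\ref{cor:two-level-variant} to \(\varphi(\mathcal C)\), identify \(b_1(X)\), \(F_1(X)\) and \(b_2(X)\), and translate the index condition into divisibility by \(|K|\,p_1(n/|K|)^{e_1(n/|K|)}\). The only difference is cosmetic. The paper handles the summation range by forming the cell \(Y\) with \(\ind(Y)=\ind(X)\cup F_1(X)\), noting that \(\varphi^{-1}(Y)\) is a coset (not necessarily in \(\mathcal C\)), and citing \eqref{eq:index-inclusion}, which is stated for arbitrary cosets of \(\Z_n\); your direct blockwise comparison is exactly the content of that step.
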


\begin{proof}[Proof of both corollaries]
Apply Corollary~\ref{cor:bff-prop1-including} (resp.\ Corollary~\ref{cor:two-level-variant})
to the cell partition \((\varphi(K_1),\dots,\varphi(K_t))\) of \(\mathcal P(\mathbf b)\).

For Corollary~\ref{cor:coset-bff}, the claim follows from \(|\varphi(K_i)|=|K_i|\),
the identity \(b_1(\varphi(K))=p_1(n/|K|)\), and \eqref{eq:index-inclusion}.

For Corollary~\ref{cor:coset-twolevel}, set \(X:=\varphi(K)\), and let \(Y\) be the cell obtained from \(X\)
by declaring all coordinates in \(F_1(X)\) to be free. Then
\[
\ind(Y)=\ind(X)\cup F_1(X),
\]
and
\[
|Y|
=
|X|\prod_{r\in F_1(X)} b_r
=
|K|\,p_1(n/|K|)^{e_1(n/|K|)}.
\]
Since \(Y\) is a cell, \(K_Y:=\varphi^{-1}(Y)\) is a coset and \(\varphi(K_Y)=Y\).
Hence, for every \(K_i\in\mathcal C\),
\[
\ind(\varphi(K_i))\subseteq \ind(X)\cup F_1(X)
\quad\Longleftrightarrow\quad
\ind(\varphi(K_i))\subseteq \ind(Y)
\quad\Longleftrightarrow\quad
\ind(\varphi(K_i))\subseteq \ind(\varphi(K_Y)).
\]
Applying \eqref{eq:index-inclusion} with \(K_Y\) in place of \(K\), we obtain
\[
\ind(\varphi(K_i))\subseteq \ind(X)\cup F_1(X)
\quad\Longleftrightarrow\quad
|K_i|\mid |K_Y|
=
|Y|
=
|K|\,p_1(n/|K|)^{e_1(n/|K|)}.
\]
This gives the stated inequality.
\end{proof}

\section{Main Result}
In this section, we prove the main theorem by developing an inductive scheme that bounds the prime exponents appearing in the coset sizes. The basic mechanism is a divisor-sum inequality, which controls the largest relevant prime and its exponent once the contribution from smaller divisor classes is bounded.

The case \(p_1=2\) requires a separate treatment, while the case of odd primes is handled uniformly through the divisor-sum inequality above. The proof is then organized through two nested inductions: first, an induction that bounds sums over proper divisor classes, and second, an outer induction on the prime level \(h\), which together yield uniform bounds for all exponents and hence for \(n\) itself.

Throughout this section, we fix a coset partition \(\Cset=(K_1,\dots,K_t)\) of \(\Z_n\) satisfying \eqref{eq:basicascoset} with prime factorization,
\[
n=\prod_{j=1}^{\ell} p_j^{s_j},
\qquad p_1<\cdots<p_\ell.
\]

\subsection{Notation for the proof}
For a positive integer \(N\), \(\sigma(N)\) denotes the sum of all positive divisors of \(N\). We say that a positive integer \(Q\) is supported on a set of primes \(S\) if every prime divisor of \(Q\) lies in \(S\).

\begin{definition}\label{def:symbols}
Let \(\Cset=(K_1,\dots,K_t)\) be a coset partition of \(\Z_n\).
\begin{enumerate}[label=(\roman*)]
\item For a subfamily \(\mathcal{F}\subseteq \Cset\) and \(1\le h\le \ell\), define
\[
d_h(\mathcal{F}) := \max\{\,\nu_{p_h}(|K|): K\in\mathcal{F}\,\}.
\]
\item For \(K\in\Cset\), define
\[
\Dset(K):=\{\,K'\in \Cset: K'\neq K,\ |K'|\mid |K|\,\}.
\]
\item For \(K\in\Cset\), define
\[
\pi_h(K):=\prod_{j=h+1}^{\ell} p_j^{\nu_{p_j}(|K|)}.
\]
\item For \(1\le h\le \ell\), define
\[
\Aset_h
:=\{\,K\in\Cset:|K|\text{ is supported on primes 
}\{p_1,\dots,p_h\}\,\}.
\]
\item Let \(1\le h\le \ell\) and let \(Q\) be supported on primes \(\{p_{h+1},\dots,p_\ell\}\).
Define
\[
\Lset_h(Q)
:=\{K\in\Cset:\pi_h(K) = Q \}.
\]
In particular, \(\Aset_h=\Lset_h(1)\).
\item If \(\mathcal{F}\subseteq \Cset\), we say \(K\in\mathcal{F}\) is \emph{division minimal in \(\mathcal{F}\)} if
\(K'\in\mathcal{F}\) and \(|K'|\mid |K|\) imply \(K'=K\).
\end{enumerate}
\end{definition}

\subsection{A divisor-sum inequality}

\begin{proposition}\label{prop:largest-prime}
Let \(N=\prod_{j=1}^{\ell} p_j^{s_j} > 1\) be a positive integer with primes \( p_1<\cdots<p_\ell \).
Fix a constant \(C_0>0\) and suppose
\begin{equation}\label{eq:assump}
  N-\sigma\!\left(\frac{N}{p_\ell}\right)\le C_0 .
\end{equation}
Then \(p_\ell\) is bounded in terms of \(C_0\) only. Moreover, if \(p_\ell\ge 3\), then each
exponent \(s_j\) (\(j = 1, \ldots,\ell\)) is also bounded in terms of \(C_0\) only.
\end{proposition}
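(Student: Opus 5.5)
Set \(M:=N/p_\ell\), so that \(N-\sigma(M)=p_\ell M-\sigma(M)=M\bigl(p_\ell-\sigma(M)/M\bigr)\). The plan is to compare \(\sigma(M)/M\) with the Euler-type product. Writing \(e_p:=\nu_p(M)\), we have
\[
\frac{\sigma(M)}{M}=\prod_{p\mid M}\Bigl(1-p^{-(e_p+1)}\Bigr)\frac{p}{p-1}\;<\;\Pi(M):=\prod_{p\mid M}\frac{p}{p-1}.
\]
The primes dividing \(M\) are among \(p_1,\dots,p_\ell\), all \(\le p_\ell\). Hence \(\Pi(M)\le\prod_{p\le p_\ell}p/(p-1)\). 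By Mertens' theorem, or by a direct check (for instance the product over primes \(\le 11\) is already below \(11/2\)), there is an absolute \(P_0\) such that \(\prod_{p\le P}p/(p-1)\le P/2\) for all primes \(P\ge P_0\). So if \(p_\ell\ge P_0\), then \(C_0\ge N-\sigma(M)\ge M p_\ell/2=N/2\ge p_\ell/2\). This gives \(p_\ell\le\max(P_0,2C_0)\), which proves the first assertion; in this range it even bounds \(N\) itself.

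For the second assertion we may therefore assume \(3\le p_\ell<P_0\). Then the set \(\{p\mid M\}\) ranges over finitely many possibilities, all absolute. Next I will check that \(p_\ell\ge\prod_{p\le p_\ell}p/(p-1)\) for every prime \(p_\ell\ge 3\), with equality only at \(p_\ell=3\) (where \(2\cdot\tfrac32=3\)). The values \(5\ (3.75)\) and \(7\ (4.375)\) are strict, and larger primes are covered by the bound above. Consequently \(\delta(S):=p_\ell-\prod_{p\in S}p/(p-1)>0\) for every admissible prime set \(S=\{p\mid M\}\), except in the single case \(p_\ell=3\), \(S=\{2,3\}\). Since there are finitely many admissible \((p_\ell,S)\), the minimum \(\delta_0\) of these positive gaps is an absolute constant. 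In all non-exceptional cases we then get \(C_0\ge M(p_\ell-\Pi(M))\ge\delta_0 M\). This bounds \(M\), hence \(N\) and every exponent \(s_j\).

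The main obstacle is the exceptional equality case \(N=2^a3^b\) with \(a\ge1\), \(b\ge 2\), where the crude product bound gives nothing. There I compute exactly:
\[
N-\sigma(2^a3^{b-1})=2^a3^b-\frac{(2^{a+1}-1)(3^b-1)}{2}=2^a+\frac{3^b-1}{2}.
\]
This quantity is \(\le C_0\) only if \(2^a\le C_0\) and \(3^b\le 2C_0+1\), which bounds \(a\) and \(b\). It remains to explain why the hypothesis \(p_\ell\ge3\) is needed. For \(N=2^a\) one has \(N-\sigma(2^{a-1})=1\), so the exponent of \(2\) is genuinely unbounded when \(p_\ell=2\), and the statement is sharp in this respect.
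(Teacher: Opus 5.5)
Your argument is correct and is essentially the paper's: compare \(\sigma(M)/M\) with the Euler product \(\prod_{p\le p_\ell}p/(p-1)\) to bound \(p_\ell\) (and \(N\) whenever the gap is positive), then handle the \(3\)-smooth case by the exact identity \(N-\sigma(N/3)=2^a+(3^b-1)/2\). The differences are minor. You replace the paper's explicit Rosser--Schoenfeld constant \(\delta_5\) by qualitative Mertens plus a finite minimum over the small pairs \((p_\ell,S)\), which is slightly more elementary and isolates \(S=\{2,3\}\) as the only degenerate case. There is one cosmetic slip: \(\sigma(M)/M<\Pi(M)\) fails for \(M=1\), where both sides equal \(1\), but you only ever use \(\le\), so nothing breaks.
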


\begin{proof}
Write \(M:=N/p_\ell^{s_\ell}=\prod_{j=1}^{\ell-1}p_j^{s_j}\).
Using \(\sigma(p^{a})=(p^{a+1}-1)/(p-1)\) for a prime \(p \), we get
\[
  \sigma\!\left(\frac{N}{p_\ell}\right)
  =\sigma(p_\ell^{\,s_\ell-1})\,\sigma(M)
  =\frac{p_\ell^{s_\ell}-1}{p_\ell-1}\,\sigma(M)
  < \frac{p_\ell^{s_\ell}}{p_\ell-1}\,\sigma(M).
\]
Hence
\begin{equation}\label{eq:key-lb}
  N-\sigma\!\left(\frac{N}{p_\ell}\right)
  >  N\!\left(1-\frac{\sigma(M)}{(p_\ell-1)M}\right).
\end{equation}

If \(M=1\), then the right hand side of \eqref{eq:key-lb} equals
\(p_\ell^{s_\ell}\bigl(1-\frac{1}{p_\ell-1}\bigr)\), which diverges as \(p_\ell\to\infty\) or
\(s_\ell\to\infty\) if \(p_\ell \ge 3\); thus \eqref{eq:assump} forces both to be bounded.

Assume \(M>1\). Then
\[
  \frac{\sigma(M)}{M}
  \;<\; \prod_{j<\ell}\frac{p_j}{p_j-1}
  \;<\; \prod_{\substack{p\le p_\ell\\ p\ \text{prime}}}\frac{p}{p-1}.
\]
A Mertens--type bound (see \cite{RosserSchoenfeld1962}) implies
\begin{equation}\label{eq:RS}
  \frac{\sigma(M)}{M}
  < e^{\gamma}\,\log p_\ell\!\left(1+\frac{1}{\log^{2}p_\ell}\right).
\end{equation}
Combining \eqref{eq:key-lb}, \eqref{eq:RS}
and the assumption \eqref{eq:assump} gives
\begin{equation}\label{eq:main}
  N\!\left(1- e^{\gamma}\!\left(1+\frac{1}{\log^{2}p_\ell}\right)
           \frac{\log p_\ell}{p_\ell-1}\right) < C_0 .
\end{equation}
The left hand side diverges as \(p_\ell\to\infty\), hence \(p_\ell\) is bounded.

If \(p_\ell\ge5\), then the parenthetical factor admits a positive absolute lower
bound
\[
  \delta_5:=1- e^{\gamma}\!\left(1+\frac{1}{(\log 5)^2}\right)\frac{\log 5}{4}
  \approx 0.0067>0 .
\]
Thus \(N\,\delta_5<C_0\), which forces \(N\) (and hence all exponents \(s_j\)) to
be bounded.

If \(p_\ell = 3\), write \(N = 2^{s_1}3^{s_2}\). Then
\[
 N-\sigma\!\Big(\frac{N}{3}\Big)
  = 2^{s_1} + \frac{3^{s_2}-1}{2}.
\]
Hence, \(s_1\) and \(s_2\) are bounded.
\end{proof}

\subsection{Auxiliary Lemmas}
\begin{proposition}
\label{prop:bound-v2-even}
Assume \(p_1=2\).
Fix an integer \(Q \ge 1\) supported on \(\{p_2,\dots,p_\ell\}\). Suppose there exists a constant \(C(m)>0\) depending only on \(m\) such that
\[ \sum_{\substack{Q'\mid Q\\ Q'\neq Q}}\ \sum_{L\in\Lset_1(Q')} |L|\ \le\ C(m). \]
Then \(d_1(\Lset_1(Q))\) is bounded in terms of \(m\) only.
\end{proposition}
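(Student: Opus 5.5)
The plan is to take an extremal coset and play a cell inequality from Section~\ref{sec:parallelotope} against the distinctness of the non-singleton coset sizes in \eqref{eq:basicascoset}. Put \(d:=d_1(\Lset_1(Q))\), and choose \(K\in\Lset_1(Q)\) with \(\nu_2(|K|)=d\), so that \(|K|=2^dQ\). We may assume \(d\ge 1\). The goal is an inequality of the form \(2^dQ\le C(m)+m\), which bounds \(d\).

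\emph{Step 1: an upper bound for the relevant divisor sums.} Every \(K_i\in\Cset\) with \(|K_i|\mid 2^{s_1}Q\) has \(|K_i|=2^aQ'\) with \(Q'\mid Q\). Split these cosets by \(Q'\).
\begin{itemize}
\item The cosets with \(Q'\neq Q\) lie in \(\Lset_1(Q')\) for a proper divisor \(Q'\) of \(Q\). By hypothesis they contribute at most \(C(m)\).
\item The cosets with \(Q'=Q\) lie in \(\Lset_1(Q)\), so their \(2\)-exponent is at most \(d\). By \eqref{eq:basicascoset}, all non-singleton cosets have pairwise distinct sizes. Hence the sizes \(2^aQ\) with \(0\le a\le d\) occur at most once each, except that size \(1\) (possible only if \(Q=1\)) occurs \(m\) times. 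So these cosets contribute at most \((2^{d+1}-1)Q+m\) in total, and at most \((2^d-1)Q+m\) if \(K\) itself is excluded.
\end{itemize}

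\emph{Step 2: case \(n/|K|\) has an odd prime divisor.} Since \(p_1=2\), either \(n/|K|\) is odd, or it is even with a second prime divisor \(p_2(n/|K|)\ge 3\).
\begin{itemize}
\item If \(n/|K|\) is odd, apply Corollary~\ref{cor:coset-bff}. Here \(p_1(n/|K|)\ge 3\), and the cosets with \(|K_i|\mid|K|\) are among those counted in Step 1. This gives
\[
2\cdot 2^dQ\le (2^d-1)Q+m+C(m).
\]
\item If \(n/|K|\) is even, apply Corollary~\ref{cor:coset-twolevel}. Here \(p_1(n/|K|)=2\) and \(p_2(n/|K|)\ge 3\). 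Every \(K_i\) with \(|K_i|\mid |K|\,2^{e_1(n/|K|)}\) satisfies \(|K_i|\mid 2^{s_1}Q\), so Step 1 applies. This gives the same inequality.
\end{itemize}
In both cases \(2^dQ<2^dQ+Q\le C(m)+m\), which bounds \(d\).

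\emph{Step 3: the remaining case \(n/|K|=2^e\) with \(e\ge1\).} This is the case I expect to be the main obstacle, because both corollaries then only yield the trivial bound \(\ge|K|\). Here \(Q\) is the full odd part of \(n\), so every coset of \(\Cset\) has size dividing \(2^{s_1}Q=n\). Summing over all of \(\Cset\) and using Step 1 gives
\[
n=2^{s_1}Q\le (2^{d+1}-1)Q+m+C(m).
\]
We have \(s_1=d+e\), and the two possibilities for \(e\) are handled as follows.
\begin{itemize}
\item If \(e=1\), then \(|K|=n/2>n/3\). This contradicts \(|K_1|\le n/3\) in \eqref{eq:basicascoset}.
\item If \(e\ge2\), then \(2^{s_1}Q-(2^{d+1}-1)Q\ge 2^{d+1}Q\). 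Hence \(2^{d+1}Q\le C(m)+m\), which again bounds \(d\).
\end{itemize}

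The delicate points are twofold. One is to use the normalization \(n_1\ge3\) precisely at the \(e=1\) step. The other is to account correctly for the \(m\) singletons, which contribute only when \(Q=1\). All resulting bounds depend only on \(m\) through \(C(m)+m\).
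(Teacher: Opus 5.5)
Your proof is correct and follows the paper's argument essentially step for step. You choose \(K\) of maximal \(2\)-adic valuation in \(\Lset_1(Q)\) and split on whether \(n/|K|\) is a power of \(2\); if not, you play Corollary~\ref{cor:coset-bff} or Corollary~\ref{cor:coset-twolevel} against the distinct-sizes bound, and if so, you sum over all of \(\Cset\) using \(|K|\le n/3\). The only differences are cosmetic: you track the \(m\) singletons explicitly instead of enlarging \(C(m)\), and in the power-of-\(2\) case you bound \(d\le s_1-2\) rather than bounding \(s_1\) directly.
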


\begin{proof}
We may assume \(\Lset_1(Q)\neq\emptyset\) and
\(d_1(\Lset_1(Q))\ge1\). Choose \(K\in\Lset_1(Q)\) such that
\(\alpha := \nu_2(|K|)=d_1(\Lset_1(Q))\). We distinguish two cases.

\smallskip\noindent\textbf{Case 1: } \(n/|K|\) is not a power of \(2\). If \(K'\in \Lset_1(Q)\setminus\{K\}\), then 
\(\nu_2(|K'|)<\alpha \), so \(|K'|\mid |K|/2\).
We obtain
\[
\sum_{\substack{K' \in \Lset_1(Q) \\ K'\neq K}}\ |K'|
\ \le\
Q\,\sigma\!\left(2^{\alpha-1}\right) + m - 1,
\]
since only singleton cosets repeat \(m\) times by assumption \eqref{eq:basicascoset}.
Therefore, after enlarging \(C(m)\), we have
\begin{equation}
\label{eq:decomposition-v2}
\sum_{K'\in\Dset(K)} |K'|
\le
\sum_{\substack{\pi_1(K')\mid Q \\ K'\neq K}} |K'|
\le
\sum_{\substack{K' \in \Lset_1(Q) \\ K'\neq K}} |K'| +\sum_{\substack{Q'\mid Q\\ Q'\neq Q}}\ \sum_{L\in\Lset_1(Q')} |L|\
\le
Q\,\sigma\!\left(2^{\alpha-1}\right) + C(m).
\end{equation}
Suppose \(n/|K|\) is odd. Then \(p_1(n/|K|) \ge 3\). Applying
 Corollary~\ref{cor:coset-bff}, we have 
\begin{equation}
\label{eq:D(K)-evaluation-v2}
  2^{\alpha+1}Q \le (p_1(n/|K|)-1)\,|K|
  \ \le\
  \sum_{\substack{K' \in \Dset(K)}} |K'|.
\end{equation}
Otherwise, \(n/|K|\) is even and not a power of \(2\). The second smallest prime divisor
\(p_2(n/|K|)\) satisfies \(p_2(n/|K|)\ge3\).
Apply Corollary~\ref{cor:coset-twolevel} with \(p_1(n/|K|)=2\) to obtain
\begin{equation}\label{eq:pi_1(K)-evaluation-v2}
2^{\alpha+1}Q \le
  (p_2(n/|K|)-1)\,|K|
  \ \le\
  \sum_{\substack{\pi_1(K')\mid Q \\ K'\neq K}} |K'|.
\end{equation}
Combining \eqref{eq:D(K)-evaluation-v2}, \eqref{eq:pi_1(K)-evaluation-v2} and \eqref{eq:decomposition-v2}, in either case, we have
\[
2^{\alpha+1}Q\le \sigma\!\left(2^{\alpha-1}\right)Q + C(m)
= (2^\alpha-1)Q +  C(m).
\]
Dividing by \(Q\) yields
\[
2^{\alpha+1}\le (2^\alpha-1)+\frac{C(m)}{Q}
\le 
(2^\alpha-1)+C(m).\]
Hence \(\alpha=d_1(\Lset_1(Q))\) is bounded in terms of \(m\) only.

\smallskip\noindent\textbf{Case 2: } \(n/|K|\) is a power of \(2\).
Then \(Q\) contains all odd prime factors of \(n\), since \(n/|K|\) is a power of \(2\).
Therefore \(\pi_1(K')\mid Q\) for all \(K'\in\Cset\).
Moreover, if \(K'\in \Lset_1(Q)\), then \(|K'|=2^eQ\) for some \(e=\nu_2(|K'|)\).
Since \(Q=n/2^{s_1}\) and \(|K'|\le n/3\) for all \(K'\in\Cset\) by \eqref{eq:basicascoset}, we obtain
\[
2^eQ \le \frac{n}{3}=\frac{2^{s_1}Q}{3},
\]
hence \(e\le s_1-2\).
Therefore,
\[
n = \sum_{K' \in \Cset} |K'|
= \sum_{K' \in \Lset_1(Q)} |K'|
+ \sum_{\substack{Q'\mid Q\\ Q'\neq Q}} \sum_{L\in\Lset_1(Q')} |L|
\le Q\,\sigma\!\left(2^{s_1-2}\right)+C(m).
\]
Dividing by \(Q\) yields
\[
2^{s_1}\le \sigma\!\left(2^{s_1-2}\right)+C(m).
\]
Hence \(s_1\) is bounded in terms of \(m\), and therefore so is \(\alpha\).
\end{proof}

\begin{proposition}
\label{prop:bound-vh_odd}
Assume \(p_h\ge 3\).
Fix \(1\le h\le \ell\) and a positive integer \(Q\) supported on primes \(\{p_{h+1},\dots,p_\ell\}\) and assume \(\Lset_h(Q)\neq\emptyset\) and \(d_h(\Lset_h(Q))\ge 1\). Suppose there exists a constant \(C(m)>0\) depending only on \(m\) such that
\[ \sum_{\substack{Q'\mid Q\\ Q'\neq Q}}\ \sum_{L\in\Lset_h(Q')} |L|\ \le\ C(m). \]
Then \(p_h\) and \(d_h(\Lset_h(Q))\) are bounded in terms of \(m\) only.
\end{proposition}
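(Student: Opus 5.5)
We mimic the proof of Proposition~\ref{prop:bound-v2-even}, replacing the explicit geometric-series estimate for the prime \(2\) by the divisor-sum inequality of Proposition~\ref{prop:largest-prime}.

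\textbf{Setup.} Choose \(K\in\Lset_h(Q)\) with \(\alpha:=\nu_{p_h}(|K|)=d_h(\Lset_h(Q))\ge 1\). Put
\[
P:=|K|/Q=\prod_{j\le h}p_j^{\nu_{p_j}(|K|)},
\]
which is supported on \(\{p_1,\dots,p_h\}\) and has \(p_h\) as its largest prime factor.

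\textbf{Step 1: lower bound.} Apply Corollary~\ref{cor:coset-bff} to \(K\). Since \(p_1(n/|K|)\ge 2\), we get
\[
\sum_{K'\in\Dset(K)}|K'|\ \ge\ |K| = PQ .
\]
A lower bound of the form \(PQ\) (rather than something weaker) is what Step 3 needs.

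\textbf{Step 2: upper bound.} Split \(\Dset(K)\) by the value of \(\pi_h(K')\). If \(K'\in\Dset(K)\), then \(|K'|\mid |K|\), so \(\pi_h(K')\mid Q\).

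The terms with \(\pi_h(K')\neq Q\) lie in \(\Lset_h(Q')\) for some proper divisor \(Q'\) of \(Q\). By hypothesis they contribute at most \(C(m)\).

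The terms with \(\pi_h(K')=Q\) satisfy \(|K'|=Q\,P'\) with \(P'\mid P\). Since \(K'\neq K\) and sizes in \eqref{eq:basicascoset} are distinct except for the \(m\) singletons, each proper divisor \(P'\) is used at most once, apart from \(m-1\) extra singletons. We need \(P'\) to be a proper divisor of \(P/p_h\), not just of \(P\). This holds because \(\nu_{p_h}(|K'|)\le\alpha\) by maximality of \(\alpha\), and equality is allowed only with a different \(p_{<h}\)-part. So a direct inclusion \(P'\mid P/p_h\) fails.

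\textbf{Main obstacle.} Here I would refine the choice of \(K\): take \(K\) division-minimal among the elements of \(\Lset_h(Q)\) with \(\nu_{p_h}=\alpha\). Then every \(K'\in\Dset(K)\cap\Lset_h(Q)\) has \(\nu_{p_h}(|K'|)<\alpha\), hence \(P'\mid P/p_h\). Distinctness of sizes then gives
\[
\sum_{K'\in\Dset(K)}|K'|\ \le\ Q\,\sigma(P/p_h)+m-1+C(m).
\]

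\textbf{Step 3: conclusion.} Combining the two bounds and dividing by \(Q\ge1\) gives
\[
P-\sigma(P/p_h)\le C(m)+m-1 .
\]
Proposition~\ref{prop:largest-prime}, applied to \(N=P\) with \(p_h\ge3\) as its largest prime, bounds \(p_h\) and all exponents of \(P\), in particular \(\alpha\), in terms of \(m\).

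If the division-minimal refinement causes trouble, the fallback is the two-level Corollary~\ref{cor:coset-twolevel} applied to cells of \(p_{<h}\)-type, but I expect the minimality choice to suffice.
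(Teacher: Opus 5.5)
Your proposal is correct and matches the paper's proof: choose $K$ division-minimal among the elements of $\Lset_h(Q)$ with maximal $\nu_{p_h}$, bound $\sum_{K'\in\Dset(K)}|K'|$ above by $Q\,\sigma(P/p_h)+C(m)$ and below by $|K|$ via Corollary~\ref{cor:coset-bff}, then apply Proposition~\ref{prop:largest-prime} to $N=P$. Delete the self-contradictory remark in Step 2 that $P'\mid P/p_h$ ``holds'' and then ``fails''; it is the division-minimal choice that makes it hold, exactly as in the paper.
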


\begin{proof}
Choose \(K\in\Lset_h(Q)\) such that \(
\nu_{p_h}(|K|)=d_h(\Lset_h(Q))
\)
and \(K\) is division minimal in
\[\{L\in\Lset_h(Q):\nu_{p_h}(|L|)=d_h(\Lset_h(Q))\}.
\]
Write \(N_K:=|K|/Q\). If \(K'\in\Dset(K)\cap\Lset_h(Q)\), then by the division-minimality,
\(\nu_{p_h}(|K'|)<\nu_{p_h}(|K|)\). Indeed if \(\nu_{p_h}(|K'|)=\nu_{p_h}(|K|)\), then \(K'\) would also belong to
\[
\{L\in\Lset_h(Q):\nu_{p_h}(|L|)=d_h(\Lset_h(Q))\},
\]
and \(|K'|\mid |K|\), contradicting the division-minimality of \(K\). This yields \(|K'|\mid |K|/p_h\), and therefore,
\[
\sum_{K'\in\Dset(K)\cap\Lset_h(Q)} |K'|
\ \le\
Q\,\sigma\!\left(\frac{N_K}{p_h}\right) + m - 1
\]
by assumption \eqref{eq:basicascoset}. After enlarging \(C(m)\), we have
\[
\sum_{K'\in\Dset(K)} |K'|
\le
\sum_{K'\in\Dset(K)\cap\Lset_h(Q)} |K'|+ \sum_{\substack{Q'\mid Q\\ Q'\neq Q}}\ \sum_{L\in\Lset_h(Q')} |L|
\le
Q\,\sigma\!\left(\frac{N_K}{p_h}\right)+C(m).
\]
By Corollary~\ref{cor:coset-bff}, we have \(|K|\le \sum_{K'\in\Dset(K)} |K'|\). Thus, we obtain
\[
N_KQ = |K| \le
Q\,\sigma\!\left(\frac{N_K}{p_h}\right)+C(m).
\]
Dividing by \(Q\) yields
\[
N_K\le \sigma\!\left(\frac{N_K}{p_h}\right)+\frac{C(m)}{Q}
\le \sigma\!\left(\frac{N_K}{p_h}\right)+C(m).
\]
Hence
\[
N_K-\sigma\!\left(\frac{N_K}{p_h}\right)\le C(m).
\]
Note that \(N_K\) is supported on \(\{p_1,\dots,p_h\}\),
and \(p_h\mid N_K\) because \(d_h(\Lset_h(Q))\ge1\).
Hence \(p_h\) is the largest prime divisor of \(N_K\) and \(\nu_{p_h}(N_K)=\nu_{p_h}(|K|)=d_h(\Lset_h(Q))\).
Applying Proposition~\ref{prop:largest-prime} with \(C_0=C(m)\),
we obtain that \(p_h\) and \(d_h(\Lset_h(Q))\) are bounded in terms of \(m\) only.
\end{proof}

\begin{lemma}\label{lem:bound-vh-general}
Fix \(1\le h\le \ell\) and a positive integer \(Q\) supported on primes \(\{p_{h+1},\dots,p_\ell\}\) and assume \(\Lset_h(Q)\neq\emptyset\) and \(d_h(\Lset_h(Q))\ge 1\). Suppose there exists a constant \(C(m)>0\) depending only on \(m\) such that
\[ \sum_{\substack{Q'\mid Q\\ Q'\neq Q}}\ \sum_{L\in\Lset_h(Q')} |L|\ \le\ C(m). \]
Then \(p_h\) and \(d_h(\Lset_h(Q))\) are bounded in terms of \(m\) only.
\end{lemma}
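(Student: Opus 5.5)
The lemma follows by splitting on whether \(p_h\) is odd or equal to \(2\), since the hypotheses coincide with those of Proposition~\ref{prop:bound-v2-even} and Proposition~\ref{prop:bound-vh_odd} in the respective cases.

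If \(p_h\ge 3\), the hypotheses of the lemma are exactly those of Proposition~\ref{prop:bound-vh_odd}: \(Q\) is supported on \(\{p_{h+1},\dots,p_\ell\}\), \(\Lset_h(Q)\neq\emptyset\), \(d_h(\Lset_h(Q))\ge 1\), and the sum over proper divisors \(Q'\) is bounded by \(C(m)\). That proposition directly gives bounds on \(p_h\) and \(d_h(\Lset_h(Q))\) depending only on \(m\).

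If \(p_h=2\), then since \(p_1<\cdots<p_\ell\) and \(2\) is the smallest prime, necessarily \(h=1\) and \(p_1=2\). Then \(\Lset_h(Q)=\Lset_1(Q)\), and \(Q\) is supported on \(\{p_2,\dots,p_\ell\}\). The assumed bound is exactly the hypothesis of Proposition~\ref{prop:bound-v2-even}, which yields that \(d_1(\Lset_1(Q))\) is bounded in terms of \(m\). The bound on \(p_h=2\) is trivial.

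Combining the two cases proves the lemma. There is no real obstacle. The only point requiring a moment's care is the observation that \(p_h=2\) forces \(h=1\), so that the even-prime proposition, which is stated only for level \(1\), applies.
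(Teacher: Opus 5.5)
Your proposal is correct and is exactly the paper's proof. The paper likewise disposes of the case \(h=1,\ p_h=2\) via Proposition~\ref{prop:bound-v2-even} and the case \(p_h\ge 3\) via Proposition~\ref{prop:bound-vh_odd}; your remark that \(p_h=2\) forces \(h=1\) just spells out what the paper leaves implicit.
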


\begin{proof}
If \(h = 1\) and \(p_h = 2\), then the lemma is a direct consequence of Proposition~\ref{prop:bound-v2-even}. If \(p_h \ge 3\), Proposition~\ref{prop:bound-vh_odd} proves the lemma.
\end{proof}

\begin{lemma}\label{lem:proper-divisor-sum-bound}
Fix \(1\le h\le \ell\) and let \(K\in\Cset\).
Put
\(
Q:=\pi_h(K)\) so that \( K\in\Lset_h(Q).\)
Assume:
\begin{enumerate}[label=(H\arabic*)]
\item \(Q\) is bounded in terms of \(m\) only;
\item \(\displaystyle \sum_{L\in\Aset_h}|L|\) is bounded in terms of \(m\) only.
\end{enumerate}
Then there exists \(C(m)>0\) depending only on \(m\) such that
\[ \sum_{\substack{Q'\mid Q\\ Q'\neq Q}}\ \sum_{L\in\Lset_h(Q')} |L|\ \le\ C(m). \]
\end{lemma}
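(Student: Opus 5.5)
The plan is a strong induction over the divisors of \(Q\), ordered by divisibility. By (H1), \(Q\) is bounded in terms of \(m\), so the number of divisors \(Q'\mid Q\) is bounded in terms of \(m\). It therefore suffices to show that for every divisor \(Q'\mid Q\), including \(Q'=Q\) itself as an auxiliary step, the quantity
\[
S(Q'):=\sum_{L\in\Lset_h(Q')}|L|
\]
is bounded in terms of \(m\). The claimed bound \(C(m)\) is then the sum of \(S(Q')\) over the boundedly many proper divisors \(Q'\). The base case is \(Q'=1\). There \(\Lset_h(1)=\Aset_h\), so \(S(1)\) is bounded directly by (H2).

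For the inductive step, fix \(Q'\mid Q\) with \(Q'>1\), and assume \(S(Q'')\le C'(m)\) for every proper divisor \(Q''\) of \(Q'\). Summing over these boundedly many \(Q''\) gives exactly the hypothesis of Lemma~\ref{lem:bound-vh-general} with \(Q'\) in place of \(Q\). If \(\Lset_h(Q')=\emptyset\) there is nothing to prove. If \(d_h(\Lset_h(Q'))=0\), every \(L\in\Lset_h(Q')\) has size \(Q'\cdot N_L\) with \(N_L\) supported on \(\{p_1,\dots,p_{h-1}\}\). Otherwise Lemma~\ref{lem:bound-vh-general} bounds \(p_h\) and \(d_h(\Lset_h(Q'))\) in terms of \(m\).

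To turn this into a bound on \(S(Q')\), I would bound the full cofactor \(N_L=|L|/Q'\), not just its \(p_h\)-part. Every element of \(\Lset_h(Q')\) has size \(Q'N_L\), and the sizes are pairwise distinct apart from the \(m\) singletons, by \eqref{eq:basicascoset}. Hence
\[
S(Q')\le Q'\sum_{N\in\mathcal N}N+m,
\]
where \(\mathcal N\) is the set of cofactors \(N_L\) that occur. It is thus enough to show that every such \(N_L\) is bounded. For this I would rerun the argument in the proof of Proposition~\ref{prop:bound-vh_odd}, applied to an element \(L\) chosen division minimal among those of maximal \(p_h\)-valuation. That argument yields
\[
N_L-\sigma\!\left(\frac{N_L}{p_h}\right)\le C(m),
\]
and the ``moreover'' part of Proposition~\ref{prop:largest-prime} (valid when \(p_h\ge3\)) then bounds every exponent of \(N_L\), not only that of \(p_h\). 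When \(p_h=2\), necessarily \(h=1\), so \(N_L\) is a power of \(2\) and Proposition~\ref{prop:bound-v2-even} already bounds it. To pass from this one extremal element to all elements of \(\Lset_h(Q')\), I would peel off the extremal element and repeat. Each removed element contributes a bounded amount to the error term \(C(m)\), and there can be only boundedly many distinct sizes at bounded \(p_h\)-valuation, once the smaller primes \(p_1,\dots,p_h\) are bounded too (since \(p_h\) is bounded).

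The main obstacle is exactly this last passage. Lemma~\ref{lem:bound-vh-general} controls only \(p_h\) and the \(p_h\)-exponent. The exponents of \(p_1,\dots,p_{h-1}\) in elements of \(\Lset_h(Q')\) must come either from the stronger conclusion of Proposition~\ref{prop:largest-prime} or from the outer induction on \(h\) that the paper sets up. If the peeling argument does not close, I would instead strengthen the statement to an induction on the pair \((h,Q')\), which would let the bounds on lower primes already be available.
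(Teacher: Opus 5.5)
There is a genuine gap for $h\ge2$. Your divisibility induction, the base case $Q'=1$ from (H2), and the use of Lemma~\ref{lem:bound-vh-general} to bound $p_h$ and $d_h(\Lset_h(Q'))$ agree with the paper. For $h=1$ this already finishes the proof, since every size in $\Lset_1(Q')$ is $Q'p_1^e$ with $e\le d_1(\Lset_1(Q'))$. For $h\ge2$, however, nothing you prove bounds the exponents of $p_1,\dots,p_{h-1}$ in elements of $\Lset_h(Q')$.

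\begin{itemize}
\item The case $d_h(\Lset_h(Q'))=0$ is stated but never bounded.
\item The peeling argument is circular. The proof of Proposition~\ref{prop:bound-vh_odd} together with the ``moreover'' clause of Proposition~\ref{prop:largest-prime} does bound $N_L$ for the \emph{first} extremal element. At the $k$-th peeling step, however, the error term must absorb every previously peeled element, since these can divide $|L|$ with the same $p_h$-valuation. So your bound grows with $k$, while the number of steps is the number of elements of $\Lset_h(Q')$, which is exactly what you are trying to control.
\item Your claim that bounded primes and bounded $p_h$-valuation leave only boundedly many sizes is false. The sizes $Q'p_h^e p_1^a$, $a=0,1,2,\dots$, are all a priori possible.
\item Once you reach elements with $\nu_{p_h}(|L|)=0$, you have $p_h\nmid N_L$, so Proposition~\ref{prop:largest-prime} cannot be applied with $p_h$ as the top prime at all.
\end{itemize}

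The missing idea is the one your fallback gestures at, which the paper makes precise. Write $S_h(Q'):=\sum_{L\in\Lset_h(Q')}|L|$. The elements of $\Lset_h(Q')$ with $\nu_{p_h}(|L|)=e$ are exactly $\Lset_{h-1}(Q'p_h^e)$, so
\[
S_h(Q')=\sum_{e=0}^{d_h(\Lset_h(Q'))} S_{h-1}(Q'p_h^e).
\]
One then proves ``$S_h(Q')$ is bounded for every divisor $Q'\mid Q$, including $Q'=Q$'' by an outer induction on $h$, with your divisibility induction nested inside. The level-$(h-1)$ hypotheses come for free:
\begin{itemize}
\item (H1) holds because $p_h$ and $d_h(\Lset_h(Q'))$ are already bounded, so each $Q'p_h^e$ is bounded.
\item (H2) holds because $\Aset_{h-1}\subseteq\Aset_h$.
\end{itemize}
The case $d_h(\Lset_h(Q'))=0$ is then just the term $e=0$, i.e.\ $\Lset_h(Q')\subseteq\Lset_{h-1}(Q')$. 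With this layering you never need to bound the full cofactor $N_L$; at each level only $p_h$ and the $p_h$-exponent are used.
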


\begin{proof}
We prove for each divisor \(Q'\mid Q\),
\[
S_h(Q'):=\sum_{L\in\Lset_h(Q')} |L|
\]
is bounded in terms of \(m\) only.
The lemma follows because \(Q\) has only finitely many divisors by assumption (H1) (bounded in terms of \(m\)).

We prove boundedness of \(S_h(Q')\) by induction on \(h\).

\smallskip\noindent\textbf{Base case: \(h=1\).}
If \(Q'=1\), then \(S_1(1)=\sum_{L\in\Aset_1}|L|\) is bounded by assumption (H2).
Fix \(Q'>1\) and assume by induction on divisibility that \(S_1(Q'')\) is bounded for all proper divisors \(Q''\mid Q'\).
If \(d_1(\Lset_1(Q'))=0\), then \(\Lset_1(Q') = \{L \in \Cset \mid |L| = Q'\}\), and this is a singleton set by \eqref{eq:basicascoset}. Hence \(S_1(Q') = Q'\) is bounded. Otherwise \(d_1(\Lset_1(Q'))\ge1\). By the induction hypothesis on divisibility, the hypotheses of Lemma~\ref{lem:bound-vh-general} are satisfied with \(h=1\), so \(d_1(\Lset_1(Q'))\) is bounded.
Note that all sizes in \(\Lset_1(Q')\) are among \(\{Q'p_1^e:0\le e\le d_1(\Lset_1(Q'))\}\),
and each size occurs at most once by \eqref{eq:basicascoset}. Hence, \(S_1(Q')\) is bounded. This completes the base \(h=1\).

\smallskip\noindent\textbf{Induction step.}
Assume the statement holds for \(h-1\ge1\).
Fix \(h\) and \(Q'\mid Q\). If \(Q'=1\), we are done by (H2).
Assume \(Q'>1\) and that \(S_h(Q'')\) is bounded for all proper divisors \(Q''\mid Q'\).
If \(d_h(\Lset_h(Q'))=0\), then \(\Lset_h(Q')\subseteq \Lset_{h-1}(Q')\) and we conclude by the induction hypothesis on \(h-1\).
Now we assume \(d_h(\Lset_h(Q'))\ge1\). By the induction hypothesis on divisibility,
Lemma~\ref{lem:bound-vh-general} gives bounds on \(p_h\) and \(d_h(\Lset_h(Q'))\).

For each \(0\le e\le d_h(\Lset_h(Q'))\), set \(Q'_e:=Q'p_h^e\).
If \(L\in\Lset_h(Q')\) and \(\nu_{p_h}(|L|)=e\), then \(L\in\Lset_{h-1}(Q'_e)\) by definition.
Thus
\[
S_h(Q')\le \sum_{e=0}^{d_h(\Lset_h(Q'))} S_{h-1}(Q'_e).
\]
Each \(Q'_e\) is bounded in terms of \(m\), and \(\sum_{L\in\Aset_{h-1}}|L|\) is bounded because \(\Aset_{h-1}\subseteq\Aset_h\).
Hence each \(S_{h-1}(Q'_e)\) is bounded by the induction hypothesis on \(h-1\), and so \(S_h(Q')\) is bounded.
This completes the induction.
\end{proof}

\subsection{Proof of the theorem}

\begin{theorem}\label{thm:main}
Assume \(n=\prod_{j=1}^{\ell} p_j^{s_j}\) with primes \( p_1<\cdots<p_\ell\).
Let \(\Cset=(K_1,\dots,K_t)\) be a coset partition of \(\Z_n\) satisfying \eqref{eq:basicascoset}.
Then, for every \(h=1,\dots,\ell\),
\[
\sum_{K\in\Aset_h}|K|
\]
is bounded by a constant depending only on \(m\).
\end{theorem}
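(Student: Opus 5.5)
Induction on \(h\), using Lemma~\ref{lem:proper-divisor-sum-bound} with \(Q=1\) is useless (it only bounds proper divisors), so I will bound \(\Aset_h\) directly from the structure of \(\Aset_h = \Lset_h(1)\) and \(\Aset_{h-1}\).

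\emph{Base case \(h=1\).} If \(p_1=2\), apply Proposition~\ref{prop:bound-v2-even} with \(Q=1\): the hypothesis sum over proper divisors of \(1\) is empty, so it holds with any \(C(m)\). Hence \(d_1(\Aset_1)\) is bounded. Since sizes in \(\Aset_1\) are powers of \(2\), each size \(2^e\) with \(e\ge1\) occurs at most once, and size \(1\) occurs exactly \(m\) times. Therefore \(\sum_{K\in\Aset_1}|K|\le m+\sigma(2^{d_1(\Aset_1)})\), which is bounded. If \(p_1\ge3\), either \(d_1(\Aset_1)=0\), giving sum \(m\), or Proposition~\ref{prop:bound-vh_odd} with \(h=1\), \(Q=1\) (again with vacuous hypothesis) bounds \(p_1\) and \(d_1(\Aset_1)\), and the same counting applies.

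\emph{Inductive step.} Assume \(\sum_{K\in\Aset_{h-1}}|K|\le B_{h-1}(m)\). Decompose \(\Aset_h\) according to \(e=\nu_{p_h}(|K|)\). Then \(K\in\Aset_h\) with \(\nu_{p_h}(|K|)=e\) means exactly \(K\in\Lset_{h-1}(p_h^e)\), and the \(e=0\) part is \(\Aset_{h-1}\). The main point is to bound \(p_h\) and \(d_h(\Aset_h)\). If \(d_h(\Aset_h)=0\), then \(\Aset_h=\Aset_{h-1}\) and we are done. Otherwise \(p_h\ge3\) (as \(h\ge2\)), and Proposition~\ref{prop:bound-vh_odd} applies with \(Q=1\), since the hypothesis over proper divisors of \(Q=1\) is vacuous. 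It yields bounds on \(p_h\) and \(d:=d_h(\Aset_h)\) in terms of \(m\). Now
\[
\sum_{K\in\Aset_h}|K|\le \sum_{e=0}^{d}\ \sum_{K\in\Lset_{h-1}(p_h^e)}|K|.
\]
For each \(e\), apply Lemma~\ref{lem:proper-divisor-sum-bound} at level \(h-1\) to some \(K\) with \(\pi_{h-1}(K)=p_h^e\), skipping empty classes. Here (H1) holds because \(p_h^e\) is bounded, and (H2) is the induction hypothesis. The proof of that lemma in fact bounds \(S_{h-1}(Q')\) for every divisor \(Q'\mid Q\), including \(Q'=Q\) itself: its argument treats each divisor, not only proper ones. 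So each inner sum is bounded, and so is the total.

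The main obstacle is this last point. The statement of Lemma~\ref{lem:proper-divisor-sum-bound} only records proper divisors, so I must either cite its proof, which establishes boundedness of \(S_h(Q')\) for all \(Q'\mid Q\), or redo that divisibility induction directly for \(Q=p_h^e\). In the latter case I would take proper divisors \(p_h^{e'}\), \(e'<e\), handled inductively in \(e\), and then apply Lemma~\ref{lem:bound-vh-general} at level \(h-1\) to \(\Lset_{h-1}(p_h^e)\), followed by the nested level descent as in that lemma's proof. I also need to check that applying Proposition~\ref{prop:bound-vh_odd} with \(Q=1\) is legitimate. It is: the constant \(C(m)\) is just enlarged by \(m-1\) to account for the singletons, and no other input is needed.
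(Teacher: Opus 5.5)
Your proposal is correct. Your base case, and your bounding of \(p_h\) and \(d_h(\Aset_h)\) via Proposition~\ref{prop:bound-vh_odd} with \(Q=1\) (equivalently Lemma~\ref{lem:bound-vh-general}), coincide with the paper. The inductive step then takes a genuinely different route.

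The paper bounds the remaining exponents \(d_{h-1}(\Aset_h),\dots,d_1(\Aset_h)\) one at a time by a descending induction. For each \(j<h\) it picks \(K\in\Aset_h\) maximizing \(\nu_{p_j}(|K|)\) and sets \(Q=\pi_j(K)\). It applies Lemma~\ref{lem:proper-divisor-sum-bound} at level \(j\), in its stated proper-divisor form and using the outer hypothesis for \(\Aset_j\), and then Lemma~\ref{lem:bound-vh-general} at level \(j\). Finally it observes that every size in \(\Aset_h\) divides a bounded \(M_h\), so the total is at most \((m-1)+\sigma(M_h)\).

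You instead use the exact layer decomposition \(\Aset_h=\bigsqcup_{e=0}^{d}\Lset_{h-1}(p_h^e)\). You then bound each layer mass \(S_{h-1}(p_h^e)\) directly with the all-divisor form of Lemma~\ref{lem:proper-divisor-sum-bound} at level \(h-1\), where \(Q=p_h^e\) is bounded. Relying on that form is legitimate. The lemma's proof explicitly establishes boundedness of \(S_h(Q')\) for every \(Q'\mid Q\), including \(Q'=Q\), and its own induction step already uses this stronger form when it bounds \(S_{h-1}(Q'_e)\).

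In effect, your step runs the lemma's estimate \(S_h(Q')\le\sum_{e}S_{h-1}(Q'p_h^e)\) at \(Q'=1\), which is exactly where the lemma short-circuits via (H2).

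What each route buys:
\begin{itemize}
\item Your argument is shorter. It needs no descending induction over \(j\) and no final divisor count, and it invokes only the level-\((h-1)\) hypothesis. The price is citing the strengthened content of the lemma's proof rather than its statement.
\item The paper's argument uses only the lemmas as stated. It also records the structural fact that each exponent \(d_j(\Aset_h)\) is individually bounded, although this follows from your mass bound as well.
\end{itemize}
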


\begin{proof}
We argue by induction on \(h\).

\smallskip\noindent\textbf{Base case: \(h=1\).}
If \(d_1(\Aset_1)=0\), then every \(K\in\Aset_1\) has \(|K|=1\), and there are at most \(m\) such cosets, so the sum is bounded.
Assume \(d_1(\Aset_1)\ge1\).
Then \(\Lset_1(1)\neq\emptyset\), and since \(Q=1\) has no proper divisors,
the hypothesis of Lemma~\ref{lem:bound-vh-general} with \(h=1\) and \(Q = 1\) is trivially satisfied.
Hence \(d_1(\Lset_1(1))=d_1(\Aset_1)\) is bounded in terms of \(m\).
Hence all sizes in \(\Aset_1\) are among \(\{p_1^e:0\le e\le d_1(\Aset_1)\}\) and each size occurs at most once, except that \(|K|=1\) can repeat at most \(m\) times.
Thus \(\sum_{K\in\Aset_1}|K|\) is bounded.

\smallskip\noindent\textbf{Induction step.}
Fix \(h\ge2\), and assume \(\sum_{K\in\Aset_j}|K|\) is bounded for every \(j<h\).
We show the same for \(\Aset_h\).

First, we bound \(p_h\) and \(d_h(\Aset_h)\). If \(d_h(\Aset_h)=0\) there is nothing to prove about the \(p_h\)-part.
Assume \(d_h(\Aset_h)\ge1\).
Because \(Q=1\) has no proper divisors, the hypothesis of Lemma~\ref{lem:bound-vh-general}
is trivially satisfied, and the lemma yields bounds on \(p_h\) and \(d_h(\Aset_h)\).

Next, we bound \(d_{h-1}(\Aset_h),d_{h-2}(\Aset_h),\dots,d_1(\Aset_h)\) by descending induction on the index.
Let \(1\le j<h\) and assume that \(d_{j+1}(\Aset_h),\dots,d_h(\Aset_h)\) are already known to be bounded.
If \(d_j(\Aset_h)=0\) we are done for this \(j\).
Otherwise choose \(K\in\Aset_h\) such that \(\nu_{p_j}(|K|) = d_j(\Aset_h)\) and set
\(
Q:=\pi_j(K).
\)
Because \(K\in\Aset_h\), only primes \(p_{j+1},\dots,p_h\) can appear in \(Q\).
Their exponents are bounded by the descending induction hypothesis, and the primes themselves are bounded
because \(p_h\) has already been shown to be bounded. Hence \(Q\) is bounded in terms of \(m\).
Moreover \(K\in\Lset_j(Q)\).

Since \(j<h\), the outer induction hypothesis gives that \(\sum_{L\in\Aset_j}|L|\) is bounded,
so Lemma~\ref{lem:proper-divisor-sum-bound} (applied with this \(j\) and \(K\)) provides a bound for
\(\sum_{\substack{Q'\mid Q\\ Q'\neq Q}}\ \sum_{L\in\Lset_j(Q')} |L|\) depending only on \(m\).
Then Lemma~\ref{lem:bound-vh-general} (with \(h=j\)) shows that \(d_j(\Lset_j(Q))\) is bounded. 
By choice of \(Q\), we note that \(d_j(\Aset_h)\le d_j(\Lset_j(Q))\), and hence \(d_j(\Aset_h)\) is bounded.

Thus \(d_1(\Aset_h),\dots,d_h(\Aset_h)\) are all bounded.
Consequently every size \(|K|\) with \(K\in\Aset_h\) divides
\[
M_h:=\prod_{j=1}^{h} p_j^{d_j(\Aset_h)}.
\]
Observe that \(M_h\) is bounded since \(p_h\) and each exponent are already bounded. By \eqref{eq:basicascoset}, each size \(>1\) occurs at most once, and \(|K|=1\) occurs at most \(m\) times, so
\[
\sum_{K\in\Aset_h}|K|
\le
(m-1)+\sum_{d\mid M_h} d
=
(m-1)+\sigma(M_h),
\]
which is bounded in terms of \(m\) only.
\end{proof}

\begin{corollary}\label{cor:main-coset}
Under the assumptions of Theorem~\ref{thm:main}, the integer \(n\) is bounded in terms of \(m\) only.
\end{corollary}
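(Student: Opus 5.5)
Apply Theorem~\ref{thm:main} with \(h=\ell\). Every \(K\in\Cset\) has \(|K|\mid n\), so \(|K|\) is supported on \(\{p_1,\dots,p_\ell\}\). Hence \(\Aset_\ell=\Cset\), and since \(\Cset\) partitions \(\Z_n\),
\[
n=\sum_{K\in\Cset}|K|=\sum_{K\in\Aset_\ell}|K|.
\]
It therefore suffices to show that the constant supplied by Theorem~\ref{thm:main} at level \(h=\ell\) depends only on \(m\), and not on \(\ell\) or on the particular partition.

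This uniformity is the one point that needs checking. The induction in Theorem~\ref{thm:main} runs over \(h=1,\dots,\ell\), and \(\ell\) itself is not fixed a priori. A naive reading would give a constant \(B_h(m)\) that grows with \(h\), and then \(B_\ell(m)\) would not be bounded in terms of \(m\) alone.

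To remove this issue I would first bound \(\ell\). At each level \(h\) with \(d_h(\Aset_h)\ge 1\), Lemma~\ref{lem:bound-vh-general} bounds \(p_h\) by a constant depending only on \(m\) (and on the bounds from earlier levels). I would also argue that \(d_h(\Aset_h)\ge1\) holds for every \(h\): the coset of \(\Cset\) containing a given element must have size divisible by \(p_h\) for some coset, since otherwise \(p_h\) would not divide \(n=\mathrm{lcm}\) of the moduli. More carefully, I would use the equivalent route already noted in Remark~\ref{rem:balister-simpson}: Simpson's inequality gives \(p_\ell\le p^*(m)\), and Lemma~\ref{lem:bound-vh-general} recovers exactly this. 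Since the primes \(p_1<\dots<p_\ell\) are distinct and all at most a bound \(P(m)\), we get \(\ell\le\pi(P(m))\).

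With \(\ell\) bounded, the chain of constants \(B_1(m),\dots,B_\ell(m)\) from Theorem~\ref{thm:main} has length bounded in \(m\). Taking \(\max_{h\le \pi(P(m))}B_h(m)\) then yields \(n\le N(m)\). The main obstacle, in my view, is justifying the uniform prime bound at the top level \(h=\ell\) without circularity. If \(d_\ell(\Aset_\ell)=0\) then \(p_\ell\nmid |K|\) for all \(K\), which contradicts \(|K_1|\le n/3\) only indirectly, so I would handle that edge case separately. Otherwise, I expect to be able to read the bounds off the proof as quantities defined recursively from \(m\) alone, and then conclude directly.
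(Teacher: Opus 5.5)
Your first paragraph is exactly the paper's proof: \(\Aset_\ell=\Cset\), so \(n=\sum_{K\in\Aset_\ell}|K|\), and Theorem~\ref{thm:main} at \(h=\ell\) finishes. The paper takes at face value that the constant there depends on \(m\) alone. Your uniformity worry is a fair observation about the \emph{proof} of Theorem~\ref{thm:main}: the induction on \(h\) produces constants \(B_h(m)\), and \(\ell\) is never bounded explicitly. Bounding \(\ell\) through Simpson's prime bound in Remark~\ref{rem:balister-simpson} does close this, since \(n\) is itself a modulus and so \(p_\ell=p^*\). The price is an external input.

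Drop, however, the claim that \(d_h(\Aset_h)\ge 1\) for every \(h\); it is false. The system \((3\Z,\,6\Z+1,\,6\Z+2,\,6\Z+4,\,6\Z+5)\) satisfies \eqref{eq:basicas} with \(m=4\), \(n=6\) and coset sizes \(2,1,1,1,1\). So \(d_2(\Aset_2)=0\) although \(p_2=3\mid n\). That \(p_h\mid n\) is automatic (\(n\) is one of the moduli) and says nothing about coset sizes.

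Consequently the edge case \(d_\ell(\Aset_\ell)=0\) genuinely occurs; it is not contradictory. Lemma~\ref{lem:bound-vh-general} (with \(Q=1\), where its constant is indeed independent of \(h\)) bounds \(p_\ell\) only when \(d_\ell(\Aset_\ell)\ge1\), so it does not by itself recover Simpson's bound.

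If you prefer a self-contained argument for the edge case, argue as follows.
\begin{enumerate}
\item When no coset size is divisible by \(p_\ell\), every coset lies in a coset of the subgroup of order \(n/p_\ell^{s_\ell}\). So \(\Cset\) splits into \(p_\ell^{s_\ell}\) independent coset partitions.
\item At most one of them consists of a single coset, because sizes \(>1\) are distinct. (If \(n/p_\ell^{s_\ell}=1\), all cosets are singletons and \(n=m\).)
\item Each of the others has its smallest size repeated by the Mirsky--Newman--Davenport--Rado theorem. Since only size \(1\) repeats, each contains two singletons.
\item Thus \(m\ge 2(p_\ell^{s_\ell}-1)\ge 2(p_\ell-1)\).
\end{enumerate}
Together with the case \(d_\ell(\Aset_\ell)\ge 1\), this bounds \(p_\ell\), and therefore \(\ell\), in terms of \(m\) alone.
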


\begin{proof}
Since \(\Aset_\ell=\Cset\) and \(\sum_{K\in\Cset}|K|=|\Z_n|=n\),
the claim follows by taking \(h=\ell\) in Theorem~\ref{thm:main}.
\end{proof}

\section{Discussion}

Corollary~\ref{cor:main-coset} shows that, for each fixed multiplicity \(m\), the largest modulus
\[
n:=\mathrm{lcm}(n_1,\dots,n_t)
\]
of a DCS satisfying \eqref{eq:basicas} is bounded in terms of \(m\) alone. 
Our proof, however, is purely qualitative. 
Because the argument proceeds through nested inductions on prime layers and divisor classes, any quantitative upper bound extracted from the proof would likely be extremely large and far from optimal.

By contrast, the computational data of Ekhad et al.~\cite{EFZ} suggest that the actual extremal behavior is much more rigid. 
In particular, among the examples listed there for \(m\le 32\), the minimal candidate for the optimal upper bound appears always to be of the form
\[
2^{s_1}3^{s_2}.
\]
This leads naturally to the following problem.

\begin{problem}
For each fixed \(m\), determine the optimal upper bound for the largest modulus \(n\) among DCSs satisfying \eqref{eq:basicas}.
\end{problem}

The available data suggest the following conjectural principle.

\begin{conjecture}
Fix \(m\). Among DCSs satisfying \eqref{eq:basicas},
the maximum value of the largest modulus is attained by one for which
\[
n=2^{s_1}3^{s_2}.
\]
\end{conjecture}

If true, this would reduce the extremal problem to the \(2\)-\(3\)-primary case. 
Moreover, the methods developed in the present paper suggest that, in this special case, the exponents \(s_1\) and \(s_2\) should admit comparatively small bounds. 
It is therefore plausible that these bounds lie below the range already verified computationally in~\cite{EFZ}. 
Such a result would bring one closer to a complete characterization of the examples appearing in the tables of~\cite{EFZ}.

A second issue concerns the role of disjointness in density estimates for uncovered sets. 
As explained in Remark~\ref{rem:balister-simpson}, removing the \(m\) copies of the repeated largest modulus from a DCS leaves a distinct-moduli covering problem whose uncovered set has density
\[
\delta(R)=\frac{m}{n}.
\]
Thus, in the present setting, boundedness of 
\(n\) is closely related to a lower bound for the density of the uncovered set. This motivates the following broader question.

\begin{question}
To what extent does the additional structure coming from disjointness improve lower bounds for the density of the uncovered set in distinct-moduli covering problems?
\end{question}

The arguments developed in this paper suggest that disjointness is not merely a technical assumption. 
Rather, it imposes strong structural constraints, and these constraints may force stronger boundedness phenomena than those presently available in the general distinct-moduli setting considered by Balister et al. 
A natural direction for future work would be to identify a quantitative measure of ``distance from disjointness'' and to study how lower bounds for uncovered density improve as this parameter decreases. 
A satisfactory formulation of such a principle would likely clarify the relationship between the present method and the framework of~\cite{BBMST}, and may also shed further light on the extremal problem discussed above.

\bibliographystyle{plain}
\bibliography{references}

\end{document}